\theoremstyle{theorem}
\newtheorem{thm}{Theorem}[section]
\newtheorem{lem}[thm]{Lemma}
\newtheorem{cor}[thm]{Corollary}
\theoremstyle{definition}
\newtheorem{dfn}[thm]{Definition}
\newtheorem{rem}[thm]{Remark}
\theoremstyle{remark}
\numberwithin{equation}{section}
\numberwithin{figure}{section}
\DeclareMathOperator{\supp}{supp}
\newcommand{\bR}{{\mathbb R}}
\renewcommand{\b}{\beta}
\renewcommand{\phi}{\varphi}
\newcommand{\ol}{\overline}
\newcommand{\fa}{\forall}
\newcommand{\nkp}{\enskip}
\newcommand{\sfa}{\nkp\fa}
\renewcommand{\(}{\left(}
\renewcommand{\)}{\right)}
\newenvironment{dedication}{\begin{quotation}\small\begin{em}}   {\par\end{em}\end{quotation}\vspace{1em}}
\begin{document}
\title{Nontrivial solutions of Hammerstein integral equations with reflections}
\date{}



\author{Alberto Cabada\footnote{alberto.cabada@usc.es. Departamento de An\'alise Ma\-te\-m\'a\-ti\-ca, Facultade de Matem\'aticas,
		Universidade de Santiago de Com\-pos\-te\-la, 15782 Santiago de Compostela, Spain. Partially supported by FEDER and Ministerio de Educaci\'on y Ciencia, Spain, project MTM2010-15314.}, \ Gennaro Infante\footnote{gennaro.infante@unical.it. Dipartimento di Matematica ed Informatica, Universit\`{a} della
		Calabria, 87036 Arcavacata di Rende, Cosenza, Italy. This paper was partially written during the visit of G. Infante to the
		Departamento de An\'alise Matem\'atica of the Universidade de Santiago
		de Compostela. G. Infante is grateful to the people of the
		aforementioned Departamento for their kind and warm hospitality.} \ and F. Adri\'an F. Tojo\footnote{fernandoadrian.fernandez@usc.es. F. Adri\'an F. Tojo, Departamento de An\'alise Ma\-te\-m\'a\-ti\-ca, Facultade de Matem\'aticas,
		Universidade de Santiago de Com\-pos\-te\-la, 15782 Santiago de Compostela, Spain. Partially supported by Diputaci\'on de A Coru\~na, Bolsas para Investigaci\'on 2012.}}

\maketitle

\begin{abstract}
Using the theory of fixed point index, we establish new results
for the existence of nonzero solutions of Hammerstein integral equations with reflections.
We apply our results to a first order periodic boundary value problem with reflections.
\end{abstract}

\begin{dedication}
It is a great pleasure for us to dedicate this paper to Professor Jean Mawhin on his seventieth birthday.
\end{dedication}
\section{Introduction}
In a recent paper Cabada and Tojo~\cite{alb-adr} studied, by means of methods and results present in \cite{Cab1,Cab2}, the first order operator $u'(t)+\omega\,u(-t)$ coupled with periodic boundary value conditions, describing the eigenvalues of the operator and providing the expression of the associated Green's function in the non-resonant case. One motivation for studying this particular problem is that differential equations with reflection of the argument have seen growing interest along the years, see for example the papers \cite{Aft, And, alb-adr, Gup, Gup2, Ma, Ore, Ore2, Pia, Pia2, Wie1} and references therein.
In \cite{alb-adr} the authors provide the range of values of the real parameter $\omega$ for which the Green's function has constant sign and apply these results to prove the existence of constant sign solutions for the nonlinear periodic problem with reflection of the argument
\begin{equation}\label{eqgenpro-intro}
u'(t)  =h(t,u(t),u(-t)),\,   t\in[-T,T],\quad
u(-T) =u(T).
\end{equation}
The methodology, analogous to the one utilized by Torres~\cite{Tor} in the case of ordinary differential equations, is to rewrite the problem \eqref{eqgenpro-intro} as an Hammerstein integral equation with reflections of the type
$$
u(t)=\int_{-T}^{T} k(t,s)[h(s,u(s),u(-s))+m\, u(-s)]\,ds,\quad t\in[-T,T],
$$
where the kernel $k$  has constant sign, and to make use of the well-known Guo-Krasnosel'ski\u\i{} theorem on
cone compression-expansion (see for example \cite{guolak}).

In this paper we continue the study of \cite{alb-adr} and we prove new results regarding the existence of nontrivial solutions of Hammerstein integral equations with reflections of the form
$$
u(t)=\int_{-T}^{T} k(t,s)g(s)f(s,u(s),u(-s))\,ds,\quad t\in[-T,T],
$$
where the kernel $k$ is allowed to be not of constant sign. In order to do this, we extend the results of \cite{gijwjiea}, valid for Hammerstein integral equations without reflections, to the new context. We make use of a cone of functions
that are allowed to change sign combined with the classical fixed point index for compact maps (we refer to \cite{amann} or \cite{guolak} for further information).
As an application of our theory we prove the existence of nontrivial solutions of the periodic problem with reflections~\eqref{eqgenpro-intro}.

\section{The case of kernels that change sign}
We begin with the case of kernels that are allowed to change sign.
We impose the following conditions on $k, f, g$ that occur in the integral equation
\begin{equation}\label{eqhamm}
u(t)=\int_{-T}^{T} k(t,s)g(s)f(s,u(s),u(-s))\,ds =:Fu(t),
\end{equation}
where $T$ is fixed in $(0,\infty)$.
\begin{enumerate}
\item [$(C_{1})$] The kernel $k$ is measurable, and for every $\tau\in
[-T,T]$ we have
\begin{equation*}
\lim_{t \to \tau} |k(t,s)-k(\tau,s)|=0 \;\text{ for almost every (a.\,e.) } s \in
[-T,T].
\end{equation*}{}
\item [$(C_{2})$]
 There exist a subinterval $[a,b] \subseteq [-T,T]$, a measurable function
$\Phi$ with $\Phi \geq 0$ a.\,e. and a constant $c=c(a,b) \in (0,1]$ such that
\begin{align*}
|k(t,s)|\leq \Phi(s) \text{ for  all }  &t \in [-T,T] \text{ and a.\,e. } \, s\in [-T,T],\\
k(t,s) \geq c\,\Phi(s) \text{ for  all } &t\in [a,b] \text{ and a.\,e. } \, s \in [-T,T].
\end{align*}{}
\item [ $(C_{3})$] The function $g$ satisfies that $g\,\Phi \in L^1[-T,T]$,  $g(t) \geq 0$ a.\,e. $t \in [-T,T]$
and $\int_a^b \Phi(s)g(s)\,ds >0$.{}
\item  [ $(C_{4})$] The nonlinearity $f:[-T,T]\times (-\infty,\infty) \times  (-\infty,\infty)
 \to [0,\infty)$ satisfies  the Carath\'{e}odory
conditions, that is, $f(\cdot,u,v)$ is measurable for each fixed
$u$ and $v$ and $f(t,\cdot,\cdot)$ is continuous for a.\,e. $t\in [-T,T]$, and for each $r>0$, there exists $\phi_{r} \in
L^{\infty}[-T,T]$ such that{}
\begin{equation*}
f(t,u,v)\le \phi_{r}(t) \;\text{ for all } \; (u,v)\in
[-r,r]\times [-r,r],\;\text{ and a.\,e. } \; t\in [-T,T].
\end{equation*}{}
\end{enumerate}
We recall the following definition.
\begin{dfn} Let $X$ be a Banach Space. A \textit{cone} on $X$ is a closed,
convex subset of $X$ such that $\lambda \, x\in K$ for $x \in K$ and
$\lambda\geq 0$ and $K\cap (-K)=\{0\}$.
\end{dfn}
Here we work in the space $C[-T,T]$, endowed with the usual supremum norm, and we use the cone
\begin{equation}\label{eqcone-cs}
K=\{u\in C[-T,T]: \min_{t \in [a,b]}u(t)\geq c \|u\|\},
\end{equation}
Note that $K \neq \{0\}$.

The cone $K$ has been essentially introduced by Infante and Webb in \cite{gijwjiea}
and later used in \cite{dfgior1, dfgior2, giems, gi-pp1, gi-pp2, gijwjmaa, gijwnodea, gijwems}. $K$ is similar to
a type of cone of \emph{non-negative} functions
 first used by Krasnosel'ski\u\i{}, see e.g. \cite{krzab}, and D.~Guo, see e.g. \cite{guolak}.
Note that functions in $K$ are positive on the
subset $[a,b]$ but are allowed to change sign in $[-T,T]$.

We require some knowledge of the classical fixed point index for
compact maps, see for example \cite{amann} or \cite{guolak} for
further information.
If $\Omega$ is a bounded open subset of $K$ (in the relative
topology) we denote by $\overline{\Omega}$ and $\partial \Omega$
the closure and the boundary relative to $K$. When $D$ is an open
bounded subset of $X$ we write $D_K=D \cap K$, an open subset of
$K$.

 Next Lemma is a direct consequence of classical results from degree theory.
\begin{lem} \label{lemind}
Let $D$ be an open bounded set with $D_{K}\ne \emptyset$ and
$\overline{D}_{K}\ne K$. Assume that $F:\overline{D}_{K}\to K$ is
a compact map such that $x\neq Fx$ for $x\in \partial D_{K}$. Then
the fixed point index $i_{K}(F, D_{K})$ has the following properties.
\begin{itemize}
\item[(1)] If there
exists $e\in K\setminus \{0\}$ such that $x\neq Fx+\lambda e$ for
all $x\in \partial D_K$ and all $\lambda
>0$, then $i_{K}(F, D_{K})=0$.
\item[(2)] If
$\mu x \neq Fx$
for all $x\in
\partial D_K$ and for every $\mu \geq 1$, then $i_{K}(F, D_{K})=1$.
\item[(3)] Let $D^{1}$ be open in $X$ with
$\overline{D^{1}}\subset D_K$. If $i_{K}(F, D_{K})=1$ and
$i_{K}(F, D_{K}^{1})=0$, then $F$ has a fixed point in
$D_{K}\setminus \overline{D_{K}^{1}}$. The same result holds if
$i_{K}(F, D_{K})=0$ and $i_{K}(F, D_{K}^{1})=1$.
\end{itemize}
\end{lem}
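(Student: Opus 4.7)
The plan is to read off the three properties from the standard axioms of the fixed point index on cones: normalization ($i_K(0,D_K)=1$ when $0\in D_K$), homotopy invariance along compact homotopies that are fixed-point free on $\partial D_{K}$, and additivity over disjoint open pieces. The only real work is to construct the appropriate homotopy in each item.

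For (1), I would employ the affine homotopy $H(x,s)=Fx+s\,\lambda_{0}\,e$, $s\in[0,1]$, where $\lambda_{0}>0$ is chosen so that the endpoint $s=1$ already admits no fixed point in $\overline{D_{K}}$. Because $F$ is compact and $\overline{D_{K}}$ is bounded, there exists $M>0$ with $\|x-Fx\|\le M$ for every $x\in\overline{D_{K}}$; taking $\lambda_{0}>M/\|e\|$ renders $x=Fx+\lambda_{0}e$ unsolvable on $\overline{D_{K}}$, so $i_{K}(F+\lambda_{0}e,D_{K})=0$. The hypothesis $x\neq Fx+\lambda e$ for $\lambda>0$, $x\in\partial D_{K}$, together with the standing assumption $x\neq Fx$ on $\partial D_{K}$, keeps the homotopy fixed-point-free on $\partial D_{K}$. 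Homotopy invariance then yields $i_{K}(F,D_{K})=i_{K}(F+\lambda_{0}e,D_{K})=0$.

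For (2), I would use the linear homotopy $H(x,t)=tFx$, $t\in[0,1]$. A fixed point of $H(\cdot,t)$ on $\partial D_{K}$ with $t\in(0,1]$ would give, upon setting $\mu=1/t\ge 1$, $\mu x=Fx$, contradicting the hypothesis; for $t=0$ the equation $x=0$ on $\partial D_{K}$ is excluded by the usual convention that $0\in D_{K}$. Homotopy invariance therefore gives $i_{K}(F,D_{K})=i_{K}(0,D_{K})$, and normalization closes the argument to $1$.

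For (3), I would invoke the additivity property applied to the disjoint open pieces $D_{K}^{1}$ and $D_{K}\setminus\overline{D_{K}^{1}}$ of $D_{K}$, on whose relative boundaries $F$ is assumed fixed-point free. This gives
\[
i_{K}(F,D_{K})=i_{K}(F,D_{K}^{1})+i_{K}(F,D_{K}\setminus\overline{D_{K}^{1}}),
\]
and under either of the two stated hypotheses the rightmost term is nonzero, so the solution property of the index produces a fixed point of $F$ in $D_{K}\setminus\overline{D_{K}^{1}}$. The main point to keep an eye on throughout is not analytic but bookkeeping: in each item one must verify that no fixed point of the chosen homotopy escapes to $\partial D_{K}$ during the deformation, and this is exactly the role played by each of the three hypotheses.
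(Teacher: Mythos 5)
Your argument is correct and is the standard one; note that the paper itself offers no proof of this lemma at all, stating only that it is ``a direct consequence of classical results from degree theory'' and pointing to \cite{amann} and \cite{guolak}, so you are supplying exactly the argument those references contain rather than diverging from the paper. Your three steps (the translation homotopy $Fx+s\lambda_0 e$ with $\lambda_0$ large enough to kill all fixed points, the contraction homotopy $tFx$ to the zero map, and additivity plus the existence property) are the textbook proof. Two small bookkeeping points are worth making explicit. First, in item (2) your homotopy at $t=0$ requires $0\in D_K$, which is not among the stated hypotheses of the lemma; it is, as you say, the usual implicit convention, and it does hold in every application in the paper (the sets $K_\rho$ and $V_\rho$ both contain $0$). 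Second, in item (1) you should observe that $Fx+s\lambda_0 e$ indeed maps into $K$ (this follows from convexity of $K$ together with closure under non-negative scalar multiplication), so the homotopy is admissible for the index in the cone. With those remarks your proposal is a complete and correct proof of the lemma.
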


\begin{dfn}
We use the following sets: $$K_{\rho}=\{u\in K: \|u\|<\rho\},\
V_\rho=\{u \in K: \displaystyle{\min_{t\in [a,b]}}u(t)<\rho \}.$$ The set $V_\rho$ was introduced in \cite{gijwems}
and is equal to the set
called $\Omega_{\rho /c}$ in \cite{gijwjmaa}. The
notation $V_\rho$ makes it clear that
choosing $c$ as large as possible yields a weaker condition to be
satisfied by $f$ in Lemma \ref{idx0b1}.
A key feature
of these sets is that they can be
nested, that is
$$
K_{\rho}\subset V_{\rho}\subset K_{\rho/c}.
$$
\end{dfn}
\begin{thm}\label{thmk}
Assume that  hypotheses $(C_{1})$-$(C_{4})$ hold for some $r>0$. Then $F$ maps
$\overline{K}_{r}$ into $K$ and is compact. When these hypotheses
hold for each $r>0$, $F$ is compact and maps $K$ into $K$.
\end{thm}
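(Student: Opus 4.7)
The plan is the standard three-step verification: (i) $F$ sends $\overline{K}_r$ into $K$ (invariance), (ii) the image $F(\overline{K}_r)$ is uniformly bounded and equicontinuous, hence relatively compact in $C[-T,T]$ by Arzel\`a--Ascoli, and (iii) $F$ is continuous on $\overline{K}_r$. The hypotheses $(C_1)$--$(C_4)$ are tailored exactly to make each step a dominated-convergence argument with the $L^1$ majorant $\Phi(s)g(s)\phi_r(s)$.

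For step (i), fix $u\in\overline{K}_r$ and note $u(s),u(-s)\in[-r,r]$, so by $(C_4)$ the integrand $g(s)f(s,u(s),u(-s))$ is bounded a.\,e.\ by $g(s)\phi_r(s)$, and $\Phi g\in L^1$ by $(C_3)$. Using the pointwise inequality $|k(t,s)|\le \Phi(s)$ from $(C_2)$ and the nonnegativity of $f$ and $g$,
\[
|Fu(t)|\le\int_{-T}^{T}\Phi(s)g(s)f(s,u(s),u(-s))\,ds=:M(u)\quad\text{for all }t\in[-T,T],
\]
while for $t\in[a,b]$ the lower bound $k(t,s)\ge c\,\Phi(s)$ gives $Fu(t)\ge c\,M(u)$. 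Hence $\|Fu\|\le M(u)$ and $\min_{t\in[a,b]}Fu(t)\ge c\|Fu\|$, so $Fu\in K$.

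For step (ii), the $u$-independent bound $|Fu(t)|\le\int_{-T}^{T}\Phi(s)g(s)\phi_r(s)\,ds$ gives uniform boundedness. For equicontinuity at $\tau\in[-T,T]$, the triangle inequality yields
\[
|Fu(t)-Fu(\tau)|\le\int_{-T}^{T}|k(t,s)-k(\tau,s)|\,g(s)\phi_r(s)\,ds,
\]
a bound independent of $u\in\overline{K}_r$. By $(C_1)$ the integrand tends to $0$ a.\,e.\ as $t\to\tau$, and it is dominated by $2\Phi(s)g(s)\phi_r(s)\in L^1$, so the right-hand side tends to $0$ by dominated convergence. This same estimate (with $u$ fixed) shows $Fu\in C[-T,T]$ in the first place. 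Arzel\`a--Ascoli then gives relative compactness of $F(\overline{K}_r)$.

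For step (iii), if $u_n\to u$ in $\overline{K}_r$ then $u_n(\pm s)\to u(\pm s)$ for every $s$; by the continuity assertion in $(C_4)$, $f(s,u_n(s),u_n(-s))\to f(s,u(s),u(-s))$ a.\,e., and the family is dominated by $\phi_r(s)$. Applying dominated convergence to $|Fu_n(t)-Fu(t)|\le\int\Phi(s)g(s)|f(s,u_n(s),u_n(-s))-f(s,u(s),u(-s))|\,ds$ shows $Fu_n(t)\to Fu(t)$; combined with the equicontinuity from step (ii), convergence is in fact uniform in $t$, so $F$ is continuous. Finally, when $(C_1)$--$(C_4)$ hold for every $r>0$, these arguments apply on each $\overline{K}_r\subset K$, yielding $F:K\to K$ compact (i.e.\ continuous and compact on bounded subsets). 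The only mildly delicate point is the equicontinuity: one must resist the temptation to seek uniform control in $s$ from $(C_1)$ and instead pass to pointwise-in-$s$ convergence plus the $L^1$ majorant, which is precisely what $(C_2)$--$(C_4)$ provide.
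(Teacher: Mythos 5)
Your proof is correct and follows essentially the same route as the paper: the cone-invariance argument (bounding $|Fu(t)|$ by $\int\Phi g f$ from above and $\min_{[a,b]}Fu$ by $c\int\Phi g f$ from below) is identical. The only difference is that you prove compactness in full via Arzel\`a--Ascoli and dominated convergence, whereas the paper simply cites Proposition 3.1 of Chapter 5 of Martin's book; your expanded argument is the standard content of that reference and is sound.
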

\begin{proof}
For $u\in
\overline{K}_{r}\text{ and }t \in [-T,T]$ we have,
\begin{align*}
|Fu(t)|& \leq\int_{-T}^{T}
|k(t,s)|g(s)f(s,u(s),u(-s))\,ds\\
&\leq    \int_{-T}^{T} \Phi(s)g(s)f(s,u(s),u(-s))\,ds,
\end{align*}
and
$$
 \min_{t\in [a,b]}Fu(t) \geq+c
\int_{-T}^{T} \Phi(s)g(s)f(s,u(s),u(-s))\,ds\geq c\|Fu\|.
$$

Therefore we have that $Fu\in K$ for every $u\in
\overline{K}_{r}$.

The compactness of $F$ follows from the fact that the Hammerstein integral operator that occurs in \eqref{eqhamm} is compact (this a consequence
of Proposition 3.1 of Chapter 5
of~\cite{martin}).
\end{proof}
 In the sequel, we give a condition that ensures that, for a suitable $\rho>0$, the index is 1 on $K_{\rho}$.
\begin{lem}
\label{ind1b} Assume that
\begin{enumerate}
\item[$(\mathrm{I}_{\protect\rho }^{1})$] \label{EqB} there exists $\rho> 0$ such that
$$
 f^{-\rho,\rho}\cdot\sup_{t\in [-T,T]}\int_{-T}^{T}|k(t,s)|g(s)\,ds
 <1
$$
where
$$
  f^{{-\rho},{\rho}}=\sup \left\{\frac{f(t,u,v)}{\rho }:\;(t,u,v)\in
[ -T,T]\times [ -\rho,\rho ]\times [-\rho,\rho ]\right\}.$$ \end{enumerate}{}
Then the fixed point index, $i_{K}(F,K_{\rho})$, is equal to 1.
\end{lem}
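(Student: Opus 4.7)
The plan is to apply part (2) of Lemma \ref{lemind}: namely, to verify that $\mu u \neq Fu$ for every $u \in \partial K_\rho$ and every $\mu \geq 1$, which immediately yields $i_K(F, K_\rho) = 1$.

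First I would argue by contradiction, assuming there exist $u \in \partial K_\rho$ (so that $\|u\| = \rho$ and $u \in K$) and $\mu \geq 1$ with $\mu u = Fu$. Taking absolute values in the fixed point equation and using the bound $|k(t,s)| \leq \Phi(s)$ from $(C_2)$ together with the nonnegativity of $g$ from $(C_3)$ and of $f$ from $(C_4)$, one gets for each $t \in [-T,T]$
$$
\mu |u(t)| \;=\; |Fu(t)| \;\leq\; \int_{-T}^{T} |k(t,s)| g(s) f(s, u(s), u(-s))\, ds.
$$

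Next I would exploit the pointwise bound on $f$. Because $\|u\| = \rho$, both $u(s)$ and $u(-s)$ lie in $[-\rho, \rho]$ for a.e.\ $s$, so by the definition of $f^{-\rho, \rho}$ we have $f(s, u(s), u(-s)) \leq f^{-\rho, \rho}\, \rho$. Substituting,
$$
\mu |u(t)| \;\leq\; f^{-\rho, \rho}\, \rho \int_{-T}^{T} |k(t,s)| g(s)\, ds \;\leq\; f^{-\rho, \rho}\, \rho \cdot \sup_{t \in [-T,T]} \int_{-T}^{T} |k(t,s)| g(s)\, ds.
$$
Taking the supremum over $t \in [-T,T]$ on the left-hand side produces $\mu \|u\| = \mu \rho$ on the left, and applying hypothesis $(I_\rho^1)$ on the right yields $\mu \rho < \rho$, that is, $\mu < 1$, contradicting $\mu \geq 1$.

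I do not anticipate a serious obstacle here: the argument is the standard ``non-expansion'' estimate, and the only mild subtlety is that the cone $K$ allows sign changes, so I must be careful to estimate $|u(t)|$ (not $u(t)$) and to pass through $|k(t,s)|$ rather than $k(t,s)$; this is exactly what $(C_2)$ is designed to accommodate. Once the contradiction is obtained, Lemma \ref{lemind}(2) gives $i_K(F, K_\rho) = 1$, as required.
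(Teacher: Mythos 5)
Your proposal is correct and follows essentially the same route as the paper: both verify the hypothesis of Lemma \ref{lemind}(2) by contradiction, bound $f(s,u(s),u(-s))$ by $\rho\, f^{-\rho,\rho}$ using $\|u\|=\rho$, pass through $|k(t,s)|$, and take the supremum over $t$ to obtain $\mu\rho<\rho$, contradicting $\mu\ge 1$. No discrepancies worth noting.
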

\begin{proof}
We show that $\mu u \neq Fu$ for every $u \in \partial K_{\rho }$
and for every $\mu \geq 1$. 
In fact, if this does not happen, there exist $\mu \geq 1$ and $u\in
\partial K_{\rho }$ such that $\mu u=Fu$,
that is
$$\mu u(t)= \int_{-T}^{T}
k(t,s)g(s)f(s,u(s),u(-s))\,ds,$$
Taking the absolute value and then the supremum for $t\in [-T,T]$ gives
\begin{align*}
\mu \rho&\leq
\sup_{t\in [-T,T]}\int_{-T}^{T}|k(t,s)|g(s)f(s,u(s),u(-s))\,ds
 &\leq\rho f^{-\rho,\rho}\cdot\sup_{t\in [-T,T]}\int_{-T}^{T}|k(t,s)|g(s)\,ds
 <\rho.
\end{align*}

This
contradicts the fact that $\mu \geq 1$ and proves the result.
\end{proof}
\begin{rem}
We point out that, as in \cite{jwgi-nodea-08}, a stronger (but easier to check) condition than $(\mathrm{I}_{\protect\rho }^{1})$ is given by the following.
\begin{equation}
  \label{eqmest}
\frac{f^{{-\rho},{\rho}}}{m}<1,
\end{equation}
where
$$\frac{1}{m}:=\sup_{t\in [-T,T]}\int_{-T}^{T}|k(t,s)|g(s)\,ds.$$
\end{rem}

 Let us see now a condition that guarantees the index is equal to zero on $V_\rho$ for some appropriate $\rho>0$.
\begin{lem}
\label{idx0b1} Assume that

\begin{enumerate}
\item[$(\mathrm{I}_{\protect\rho }^{0})$] there exist $\rho >0$ such that
such that
$$
f_{(\rho ,{\rho /c})}\cdot\inf_{t\in [a,b]}\int_{a}^{b}k(t,s)g(s)\,ds
 >1,
$$
where
$$
f_{(\rho ,{\rho /c})} =\inf \left\{\frac{f(t,u,v)}{\rho }%
:\;(t,u,v)\in [a,b]\times [\rho ,\rho /c]\times
[-\rho /c,\rho /c]\right\}.$$
\end{enumerate}

Then $i_{K}(F,V_{\rho})=0$.
\end{lem}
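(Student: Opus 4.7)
The plan is to apply part (1) of Lemma~\ref{lemind} with the constant function $e(t)\equiv 1$, which lies in $K\setminus\{0\}$ since $\min_{t\in[a,b]}e(t)=1\geq c=c\|e\|$. Concretely, I will show that the equation $u=Fu+\lambda e$ has no solution on $\partial V_\rho$ for any $\lambda>0$, which by the lemma yields $i_K(F,V_\rho)=0$.

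Suppose for contradiction that some $u\in\partial V_\rho$ and $\lambda>0$ satisfy $u=Fu+\lambda e$. Because $u\in\partial V_\rho$, we have $\min_{t\in[a,b]}u(t)=\rho$; combined with the cone inequality $\min_{t\in[a,b]}u(t)\geq c\|u\|$ this gives $\|u\|\leq \rho/c$. Consequently, for $s\in[a,b]$ we have $u(s)\in[\rho,\rho/c]$ (from the minimum and norm bounds), while for arbitrary $s\in[-T,T]$ we only get $u(-s)\in[-\rho/c,\rho/c]$; these are exactly the ranges appearing in the definition of $f_{(\rho,\rho/c)}$.

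Next, I will estimate $Fu$ from below on $[a,b]$. For $t\in[a,b]$, condition $(C_{2})$ guarantees $k(t,s)\geq c\,\Phi(s)\geq 0$ for a.e.\ $s\in[-T,T]$, so the integrand in $Fu(t)$ is nonnegative (using $g\geq 0$ and $f\geq 0$). Restricting the domain of integration to $[a,b]$ and applying the pointwise bound $f(s,u(s),u(-s))\geq \rho f_{(\rho,\rho/c)}$ valid for $s\in[a,b]$, I obtain
\begin{equation*}
Fu(t)\geq \rho\,f_{(\rho,\rho/c)}\int_{a}^{b}k(t,s)g(s)\,ds\qquad (t\in[a,b]).
\end{equation*}
Adding $\lambda e(t)=\lambda$ and taking the infimum over $t\in[a,b]$ in the identity $u=Fu+\lambda e$ then gives
\begin{equation*}
\rho=\min_{t\in[a,b]}u(t)\geq \rho\,f_{(\rho,\rho/c)}\inf_{t\in[a,b]}\int_{a}^{b}k(t,s)g(s)\,ds+\lambda.
\end{equation*}
By hypothesis $(\mathrm{I}_\rho^{0})$ the first summand on the right strictly exceeds $\rho$, so $\rho>\rho+\lambda$, contradicting $\lambda>0$.

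There is no real obstacle beyond careful bookkeeping; the only delicate point is the asymmetry between $u(s)$ and $u(-s)$ for $s\in[a,b]$. Since $-s$ need not lie in $[a,b]$, the sharper lower bound $u(-s)\geq\rho$ is unavailable, and one must use only the norm bound $|u(-s)|\leq \rho/c$. This is precisely why $(\mathrm{I}_\rho^{0})$ is stated with $v$ ranging over the full symmetric interval $[-\rho/c,\rho/c]$ rather than the one-sided interval, and the argument above exploits exactly this matching.
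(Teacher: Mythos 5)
Your proof is correct and follows the paper's argument essentially verbatim: the same choice $e\equiv 1$, the same restriction of the integral to $[a,b]$ using the nonnegativity of $k(t,\cdot)g f$ there, the same pointwise bound $f(s,u(s),u(-s))\geq\rho f_{(\rho,\rho/c)}$, and the same contradiction $\rho>\rho$ after taking the minimum over $[a,b]$ (your write-up is in fact more careful than the paper's in justifying the ranges $u(s)\in[\rho,\rho/c]$ and $u(-s)\in[-\rho/c,\rho/c]$). The only omission is the case $\lambda=0$, which is needed so that $F$ has no fixed point on $\partial V_\rho$ and the index is defined at all; the paper records this separately, and your inequality delivers it unchanged if you simply allow $\lambda\geq 0$.
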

\begin{proof} Let $e(t)\equiv 1$, then $e \in K$.
We prove that
\begin{equation*}
u\ne Fu+\lambda e\quad\text{for  all } u\in \partial
V_{\rho}\text{ and } \lambda \geq 0.
\end{equation*}
In fact, if not, there exist $u\in \partial V_\rho$ and $\lambda\geq 0$
such that $u=Fu+\lambda e$. Then we have$$
u(t)=\int_{-T}^{T}
k(t,s)g(s)f(s,u(s),u(-s))\,ds+\lambda.$$
Thus we get, for $t\in[a,b]$,

\begin{align*}
u(t)&=\int_{-T}^{T}
k(t,s)g(s)f(s,u(s),u(-s))\,ds+ \lambda \ge
\int_{a}^{b}
k(t,s)g(s)f(s,u(s),u(-s))\,ds\\ &\ge
\rho f_{(\rho ,{\rho /c})}
\;  \left(
\int_{a}^{b}
k(t,s)g(s)\,ds\right).
\end{align*}
Taking the minimum over $[a,b]$ gives
$\rho>\rho$ a contradiction.
\end{proof}
\begin{proof}
We prove that
\begin{equation*}
u\ne Fu\quad\text{for  all } u\in \partial
V_{\rho}.
\end{equation*}
In fact, if not, there exist $u\in \partial V_\rho$ such that $u=Fu$. Then we have$$
u(t)=\int_{-T}^{T}
k(t,s)g(s)f(s,u(s),u(-s))\,ds.$$
Thus we get, for $t\in[a,b]$,

\begin{align*}
u(t)&=\int_{-T}^{T}
k(t,s)g(s)f(s,u(s),u(-s))\,ds\ge
\int_{a}^{b}
k(t,s)g(s)f(s,u(s),u(-s))\,ds\\ &\ge
\rho f_{(\rho ,{\rho /c})}
\;  \left(
\int_{a}^{b}
k(t,s)g(s)\,ds\right).
\end{align*}
Taking the minimum over $[a,b]$ gives
$\rho>\rho$ a contradiction.
\end{proof}
\begin{rem}
We point out that, as in \cite{jwgi-nodea-08}, a stronger (but easier to check) condition than $(\mathrm{I}_{\protect\rho }^{0})$ is given by the following.
\begin{equation}
  \label{eqmest2}
\dfrac{f_{(\rho ,{\rho /c})}}{M(a,b)}>1,
\end{equation}
where
$$
\frac{1}{M(a,b)} :=\inf_{t\in [a,b]}\int_{a}^{b}k(t,s)g(s)\,ds.
$$
\end{rem}

The above Lemmas can be combined to prove the following Theorem. Here we
deal with the existence of at least one, two or three solutions. We stress
that, by expanding the lists in conditions $(S_{5}),(S_{6})$ below, it is
possible to state results for four or more positive solutions, see for
example the paper by Lan~\cite{kljdeds} for the type of results that might be stated. We omit
the proof which follows directly from the properties of the fixed point index  stated in Lemma \ref{lemind}, (3).
\begin{thm}
\label{thmmsol1} The integral equation \eqref{eqhamm} has at least one non-zero solution
in $K$ if either of the following conditions hold.

\begin{enumerate}

\item[$(S_{1})$] There exist $\rho _{1},\rho _{2}\in (0,\infty )$ with $\rho
_{1}/c<\rho _{2}$ such that $(\mathrm{I}_{\rho _{1}}^{0})$ and $(\mathrm{I}_{\rho _{2}}^{1})$ hold.

\item[$(S_{2})$] There exist $\rho _{1},\rho _{2}\in (0,\infty )$ with $\rho
_{1}<\rho _{2}$ such that $(\mathrm{I}_{\rho _{1}}^{1})$ and $(\mathrm{I}%
_{\rho _{2}}^{0})$ hold.
\end{enumerate}
The integral equation \eqref{eqhamm} has at least two non-zero solutions in $K$ if one of
the following conditions hold.

\begin{enumerate}

\item[$(S_{3})$] There exist $\rho _{1},\rho _{2},\rho _{3}\in (0,\infty )$
with $\rho _{1}/c<\rho _{2}<\rho _{3}$ such that $(\mathrm{I}_{\rho
_{1}}^{0}),$ $(
\mathrm{I}_{\rho _{2}}^{1})$ $\text{and}\;\;(\mathrm{I}_{\rho _{3}}^{0})$
hold.

\item[$(S_{4})$] There exist $\rho _{1},\rho _{2},\rho _{3}\in (0,\infty )$
with $\rho _{1}<\rho _{2}$ and $\rho _{2}/c<\rho _{3}$ such that $(\mathrm{I}%
_{\rho _{1}}^{1}),\;\;(\mathrm{I}_{\rho _{2}}^{0})$ $\text{and}\;\;(\mathrm{I%
}_{\rho _{3}}^{1})$ hold.
\end{enumerate}
The integral equation \eqref{eqhamm} has at least three non-zero solutions in $K$ if one
of the following conditions hold.

\begin{enumerate}
\item[$(S_{5})$] There exist $\rho _{1},\rho _{2},\rho _{3},\rho _{4}\in
(0,\infty )$ with $\rho _{1}/c<\rho _{2}<\rho _{3}$ and $\rho _{3}/c<\rho
_{4}$ such that $(\mathrm{I}_{\rho _{1}}^{0}),$ $(\mathrm{I}_{\rho _{2}}^{1}),\;\;(\mathrm{I}%
_{\rho _{3}}^{0})\;\;\text{and}\;\;(\mathrm{I}_{\rho _{4}}^{1})$ hold.

\item[$(S_{6})$] There exist $\rho _{1},\rho _{2},\rho _{3},\rho _{4}\in
(0,\infty )$ with $\rho _{1}<\rho _{2}$ and $\rho _{2}/c<\rho _{3}<\rho _{4}$
such that $(\mathrm{I}_{\rho _{1}}^{1})$, $(\mathrm{I}_{\rho
_{2}}^{0})$, $(\mathrm{I}_{\rho _{3}}^{1})$ and $(\mathrm{I}%
_{\rho _{4}}^{0})$ hold.
\end{enumerate}
\end{thm}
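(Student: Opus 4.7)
The plan is to realize the theorem as a direct corollary of the additivity/excision property of the fixed point index, namely Lemma \ref{lemind}(3), applied to the two families of open subsets of $K$ on which the index is already computed: the sets $K_{\rho}$ on which Lemma \ref{ind1b} yields $i_{K}(F,K_{\rho})=1$ whenever $(\mathrm{I}_{\rho}^{1})$ holds, and the sets $V_{\rho}$ on which Lemma \ref{idx0b1} yields $i_{K}(F,V_{\rho})=0$ whenever $(\mathrm{I}_{\rho}^{0})$ holds. The backbone of the argument is the nesting relation
\[
K_{\rho}\subset V_{\rho}\subset K_{\rho/c},
\]
already recorded in the paper, which allows one to interlace the two families in the order required by Lemma \ref{lemind}(3).

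For the single-solution cases, take $(S_{1})$: from $(\mathrm{I}_{\rho_{1}}^{0})$ we get $i_{K}(F,V_{\rho_{1}})=0$, and from $(\mathrm{I}_{\rho_{2}}^{1})$ we get $i_{K}(F,K_{\rho_{2}})=1$. The inequality $\rho_{1}/c<\rho_{2}$ together with the nesting above forces $\overline{V_{\rho_{1}}}\subset K_{\rho_{2}}$, so Lemma \ref{lemind}(3) produces a fixed point of $F$ in the annular region $K_{\rho_{2}}\setminus\overline{V_{\rho_{1}}}$; since any element of $V_{\rho_{1}}$ already has $\min_{[a,b]}u<\rho_{1}$ and any fixed point in $V_{\rho_{1}}^{c}$ lies in $K$ with positive norm, this solution is nontrivial. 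Case $(S_{2})$ is symmetric: $\rho_{1}<\rho_{2}$ gives $\overline{K_{\rho_{1}}}\subset V_{\rho_{2}}$, and a fixed point is located in $V_{\rho_{2}}\setminus\overline{K_{\rho_{1}}}$.

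For the multiplicity statements $(S_{3})$--$(S_{6})$ I would simply stack these annular arguments: each consecutive pair of alternating conditions $(\mathrm{I}^{0}),(\mathrm{I}^{1})$ or $(\mathrm{I}^{1}),(\mathrm{I}^{0})$ produces one nontrivial fixed point in the corresponding annulus, and the inequalities between the $\rho_{i}$ (with the factor $1/c$ inserted precisely where one needs to pass from a $V_{\rho}$ to the next $K_{\rho'}$) guarantee that these annular regions are pairwise disjoint, yielding distinct solutions. The only delicate point, and the one I would write out carefully, is verifying these inclusions of closures in the relative topology of $K$ so that Lemma \ref{lemind}(3) applies verbatim; everything else is a routine bookkeeping of the index values along the chain of nested sets.
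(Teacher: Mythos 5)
Your proposal is correct and follows exactly the route the paper intends: the paper omits the proof of Theorem \ref{thmmsol1}, stating only that it ``follows directly from the properties of the fixed point index stated in Lemma \ref{lemind}, (3)'', and your argument supplies precisely those details --- the index values from Lemmas \ref{ind1b} and \ref{idx0b1}, the nesting $K_{\rho}\subset V_{\rho}\subset K_{\rho/c}$ to obtain the closure inclusions, and the excision property to locate a nontrivial fixed point in each annular region.
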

\section{The case of non-negative kernels}
We now assume the functions $k, f, g$ that occur in \eqref{eqhamm} satisfy the conditions $(C_{1})-(C_{4})$ in the previous section,
where $(C_{2})$ and $(C_{4})$ are replaced with the following.
\begin{enumerate}
\item [$(C_{2}')$] The kernel $k$ is non-negative for $t \in [-T,T] \text{ and a.\,e. } \, s\in [-T,T]$ and
there exist a subinterval $[a,b] \subseteq [-T,T]$, a measurable function
$\Phi$, and a constant $c=c(a,b) \in (0,1]$ such that
\begin{align*}
k(t,s)\leq \Phi(s) \text{ for } &t \in [-T,T] \text{ and a.\,e. } \, s\in [-T,T],\\
k(t,s) \geq c\Phi(s) \text{ for } &t\in [a,b] \text{ and a.\,e. } \, s \in [-T,T].
\end{align*}{}
\item [$(C_{4}')$] The nonlinearity $f:[-T,T]\times [0,\infty) \times  [0,\infty)
 \to [0,\infty)$ satisfies Carath\'{e}odory
conditions, that is, $f(\cdot,u,v)$ is measurable for each fixed
$u$ and $v$ and $f(t,\cdot,\cdot)$ is continuous for a.\,e. $t\in [-T,T]$, and for each $r>0$, there exists $\phi_{r} \in
L^{\infty}[-T,T]$ such that{}
\begin{equation*}
f(t,u,v)\le \phi_{r}(t) \;\text{ for all } \; (u,v)\in
[0,r]\times [0,r],\;\text{ and a.\,e. } \; t\in [-T,T].
\end{equation*}{}
\end{enumerate}
These hypotheses enable us to work in the cone of non-negative functions
\begin{equation}\label{eqcone-ng}
K'=\{u\in C[-T,T]: u\geq 0, \min_{t \in [a,b]}u(t)\geq c \|u\|\},
\end{equation}
that is smaller than the cone \eqref{eqcone-cs}. It is possible to show that $F$ is compact and leaves the cone $K'$ invariant.
The conditions on the index are given by the following Lemmas, the proofs are omitted as they are similar to the ones in the previous section.
\begin{lem}
\label{ind1-ng} Assume that
\begin{enumerate}
\item[$\(\ol{\mathrm{I}_{\protect\rho}^{1}}\)$] \label{EqB-ng} there exists $\rho> 0$ such that
$
  f^{0,\rho} < m,
$
where
$$
  f^{{0},{\rho}}=\sup \left\{\frac{f(t,u,v)}{\rho }:\;(t,u,v)\in
[ -T,T]\times [0,\rho ]\times [0,\rho ]\right\}.$$
\end{enumerate}{}
Then $i_{K}(F,K_{\rho})=1$.
\end{lem}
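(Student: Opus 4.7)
The plan is to mirror the argument of Lemma \ref{ind1b}, exploiting the simplifications afforded by working in the smaller cone $K'$ of non-negative functions: since $u \in K'$ satisfies $u(s), u(-s) \in [0,\rho]$ whenever $\|u\|\le\rho$, the symmetric rectangle $[-\rho,\rho]\times[-\rho,\rho]$ can be replaced by $[0,\rho]\times[0,\rho]$, and since $(C_2')$ gives $k\ge 0$, the absolute values around the kernel in the original estimate disappear.

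Concretely, I would argue by contradiction and verify property (2) of Lemma \ref{lemind}. Suppose there exist $\mu\ge 1$ and $u\in\partial K'_\rho$ (so $\|u\|=\rho$) with $\mu u = Fu$. Writing this out,
\[
\mu u(t)=\int_{-T}^{T} k(t,s)g(s)f(s,u(s),u(-s))\,ds,
\]
and taking absolute values (using $k\ge 0$, $g\ge 0$, $f\ge 0$, $u\ge 0$) followed by the supremum over $t\in[-T,T]$ yields
\[
\mu\rho \le \sup_{t\in[-T,T]}\int_{-T}^{T} k(t,s)g(s)f(s,u(s),u(-s))\,ds.
\]
Since $u\in K'$ forces $(u(s),u(-s))\in[0,\rho]\times[0,\rho]$ for a.e.\ $s$, the definition of $f^{0,\rho}$ gives $f(s,u(s),u(-s))\le \rho\,f^{0,\rho}$, and hence
\[
\mu\rho \le \rho\,f^{0,\rho}\cdot\sup_{t\in[-T,T]}\int_{-T}^{T} k(t,s)g(s)\,ds = \rho\,\frac{f^{0,\rho}}{m} < \rho,
\]
by hypothesis $\(\ol{\mathrm{I}_{\rho}^{1}}\)$. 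This contradicts $\mu\ge 1$, so no such fixed-point-up-to-scaling exists, and Lemma \ref{lemind}(2) gives $i_{K'}(F,K'_\rho)=1$.

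There is no real obstacle: the argument is essentially identical to Lemma \ref{ind1b}, and the only care required is to observe that on $K'$ the bounds on $f$ can be taken over $[0,\rho]$ rather than over the symmetric interval, which matches precisely the definition of $f^{0,\rho}$ used in the statement. The compactness of $F$ and the invariance $F(K')\subset K'$ needed to form the index have already been asserted in the paragraph introducing $K'$.
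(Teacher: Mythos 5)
Your proof is correct and is essentially the argument the paper intends: the paper omits the proof of this lemma, stating it is similar to that of Lemma \ref{ind1b}, and your adaptation (dropping the absolute values since $k\ge 0$ and restricting the rectangle to $[0,\rho]^2$ since $u\in K'$ is non-negative) is exactly the required modification.
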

\begin{lem}
\label{idx0-ng} Assume that
\begin{enumerate}
\item[$\(\ol{\mathrm{I}_{\protect\rho }^{0}}\)$] there exist $\rho >0$ such that
$
f_{(\rho ,{\rho /c})'}>M
$,  where
$$
f_{(\rho ,{\rho /c})'} =\inf \left\{\frac{f(t,u,v)}{\rho }%
:\;(t,u,v)\in [a,b]\times [\rho ,\rho /c]\times
[0,\rho /c]\right\}.$$
\end{enumerate}
Then $i_{K}(F,V_{\rho})=0$.
\end{lem}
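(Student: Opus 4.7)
The plan is to mirror the argument used for Lemma \ref{idx0b1}, adapted to the cone $K'$ and the modified nonlinearity. Take $e(t)\equiv 1$; since $e\geq 0$ and $\min_{t\in[a,b]}e(t)=1\geq c=c\|e\|$, we have $e\in K'\setminus\{0\}$, so this is an admissible choice to invoke part (1) of Lemma \ref{lemind}. The target is then to show that
\[
u\ne Fu+\lambda e\quad\text{for all } u\in\partial V_{\rho}\text{ and all } \lambda\geq 0,
\]
which forces $i_{K'}(F,V_\rho)=0$.

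Suppose, for contradiction, that such $u$ and $\lambda$ exist. First I would record the bounds on $u$ that come from its membership in $K'$ together with the fact that $u\in\partial V_\rho$: from the nesting $V_\rho\subset K_{\rho/c}$ one gets $0\leq u(t)\leq \rho/c$ for every $t\in[-T,T]$, while on $\partial V_\rho$ the additional relation $\min_{t\in[a,b]}u(t)=\rho$ yields $u(s)\in[\rho,\rho/c]$ for $s\in[a,b]$. In particular, for $s\in[a,b]$ the triple $(s,u(s),u(-s))$ lies in $[a,b]\times[\rho,\rho/c]\times[0,\rho/c]$, which is precisely the set over which the infimum $f_{(\rho,\rho/c)'}$ is taken, so $f(s,u(s),u(-s))\geq \rho\, f_{(\rho,\rho/c)'}$.

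Next, for $t\in[a,b]$, I would drop the $\lambda e\geq 0$ term and restrict the integration to $[a,b]$, using the non-negativity of $k$ supplied by $(C_2')$ and of $g$ supplied by $(C_3)$:
\[
u(t)=\int_{-T}^{T}k(t,s)g(s)f(s,u(s),u(-s))\,ds+\lambda\geq \rho\, f_{(\rho,\rho/c)'}\int_{a}^{b}k(t,s)g(s)\,ds.
\]
Taking the infimum over $t\in[a,b]$ and using the definition of $M=M(a,b)$ in \eqref{eqmest2} gives
\[
\rho\;=\;\min_{t\in[a,b]}u(t)\;\geq\;\frac{\rho\, f_{(\rho,\rho/c)'}}{M},
\]
so $f_{(\rho,\rho/c)'}\leq M$, contradicting the hypothesis $\bigl(\ol{\mathrm{I}_{\rho}^{0}}\bigr)$.

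The only real subtlety — and what I would expect to be the main obstacle — is the bookkeeping that certifies the pair $(u(s),u(-s))$ lies in the correct rectangle used to define $f_{(\rho,\rho/c)'}$: for $s\in[a,b]$ the point $-s$ need not lie in $[a,b]$, so one can only conclude $u(-s)\in[0,\rho/c]$ (not $u(-s)\geq\rho$), and this is exactly why the second interval in the definition of $f_{(\rho,\rho/c)'}$ is $[0,\rho/c]$ rather than $[\rho,\rho/c]$. Once this is observed, the rest of the argument is the same chain of integral estimates used in Lemma \ref{idx0b1}.
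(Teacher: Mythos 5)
Your proof is correct and follows exactly the route the paper intends: the paper omits this proof as "similar to the ones in the previous section," and your argument is the straightforward adaptation of the proof of Lemma \ref{idx0b1} to the cone $K'$, including the key observation that for $s\in[a,b]$ one only knows $u(-s)\in[0,\rho/c]$, which is precisely why the second interval in $f_{(\rho,\rho/c)'}$ is $[0,\rho/c]$. No gaps.
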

A result equivalent to Theorem~\ref{thmmsol1} is clearly valid in this case, with nontrivial solutions belonging to the cone \eqref{eqcone-ng}.
\section{The case of kernels with extra positivity}
We now assume the the functions $k, f, g$ that occur in \eqref{eqhamm} satisfy the conditions $(C_{1})$,$(C_{2}')$, $(C_{3})$ and $(C_{4}')$ with $[a,b]=[-T,T]$; in particular note that the kernel satisfies the stronger positivity requirement
$$
c\Phi(s) \leq k(t,s)\leq \Phi(s) \text{ for } t \in [-T,T] \text{ and a.\,e. } \, s\in [-T,T].
$$
These hypotheses enable us to work in the cone
\begin{equation}\label{eqcone-sp}
K^{''}=\{u\in C[-T,T]: \min_{t \in [-T,-T]}u(t)\geq c \|u\|\}.
\end{equation}
\begin{rem}
Note that a function in $K''$ that possesses a  non-trivial norm, has the useful property that is strictly positive on $[-T,T]$.
\end{rem}
Once gain $F$ is compact and leaves the cone $K''$ invariant. The assumptions on the index are as follows.
\begin{lem}
\label{ind1-sp} Assume that
\begin{enumerate}
\item[$\(\widetilde{\mathrm{I}_{\protect\rho}^{1}}\)$] \label{EqB-sp} there exists $\rho> 0$ such that
$
  f^{c\rho,\rho} < m,
$
where
$$
  f^{c\rho,{\rho}}=\sup \left\{\frac{f(t,u,v)}{\rho }:\;(t,u,v)\in
[ -T,T]\times [c\rho,\rho ]\times [c\rho,\rho ]\right\}.$$
\end{enumerate}{}
Then $i_{K}(F,K_{\rho})=1$.
\end{lem}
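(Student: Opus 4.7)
The plan is to replicate the argument used in Lemma \ref{ind1b} and Lemma \ref{ind1-ng}, exploiting the stronger two-sided positivity available in the cone $K''$. Specifically, I would show that $\mu u \neq Fu$ for every $u \in \partial K_\rho$ (relative to $K''$) and every $\mu \geq 1$; the conclusion $i_K(F,K_\rho)=1$ then follows from part (2) of Lemma \ref{lemind}.

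The key observation that makes the weaker pointwise bound $f^{c\rho,\rho}$ sufficient is that a function $u\in K''$ with $\|u\|=\rho$ automatically satisfies $c\rho \leq u(t) \leq \rho$ for every $t\in[-T,T]$. Since the interval $[-T,T]$ is symmetric about the origin, the reflected values $u(-s)$ enjoy the same bounds. Therefore, for such $u$, the pair $(u(s),u(-s))$ lies in $[c\rho,\rho]\times[c\rho,\rho]$ for a.e.\ $s$, which is precisely the domain over which the supremum defining $f^{c\rho,\rho}$ is taken. Consequently $f(s,u(s),u(-s))\leq \rho\, f^{c\rho,\rho}$ a.e.

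Assuming for contradiction that $\mu u = Fu$ for some $\mu\geq 1$ and $u\in\partial K_\rho$, I would take absolute values, use the non-negativity of $k$ from $(C_2')$, and pass to the supremum over $t\in[-T,T]$ to obtain
$$
\mu\rho = \|\mu u\| \leq \sup_{t\in[-T,T]} \int_{-T}^{T} k(t,s)g(s)\, f(s,u(s),u(-s))\,ds \leq \rho\, f^{c\rho,\rho}\cdot\frac{1}{m} < \rho,
$$
where the last inequality is the hypothesis $\(\widetilde{\mathrm{I}_\rho^{1}}\)$. This contradicts $\mu\geq 1$ and completes the proof.

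I do not anticipate a real obstacle here: the compactness and invariance of $F$ on $K''$ are asserted in the statement preceding the lemma, and the argument is essentially a transcription of the proof of Lemma \ref{ind1-ng} with the pointwise lower bound $u(t)\geq c\|u\|$ used to replace $f^{0,\rho}$ by the strictly smaller quantity $f^{c\rho,\rho}$. The only subtlety worth flagging is the symmetric use of the bound for both $u(s)$ and $u(-s)$, which is valid precisely because $K''$ controls $u$ uniformly on the whole symmetric interval $[-T,T]$.
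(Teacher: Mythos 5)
Your proposal is correct and is essentially the argument the paper intends: the paper omits the proof of this lemma as being analogous to that of Lemma \ref{ind1b}, and your adaptation — using the cone condition of $K''$ on the whole symmetric interval to place $(u(s),u(-s))$ in $[c\rho,\rho]\times[c\rho,\rho]$, then invoking $\mu u\neq Fu$ and part (2) of Lemma \ref{lemind} — is exactly that transcription. No gaps.
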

\begin{lem}
\label{idx0-sp} Assume that
\begin{enumerate}
\item[$\(\widetilde{\mathrm{I}_{\protect\rho}^{1}}\)$] there exist $\rho >0$ such that
$
f_{(\rho ,{\rho /c})^{''}}>M
$,  where
$$
f_{(\rho ,{\rho /c})^{''}}=\inf \left\{\frac{f(t,u,v)}{\rho }%
:\;(t,u,v)\in [a,b]\times [\rho ,\rho /c]\times
[\rho,\rho /c]\right\}.$$
\end{enumerate}
Then $i_{K}(F,V_{\rho})=0$.
\end{lem}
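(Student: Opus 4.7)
The plan is to mirror the argument of Lemma \ref{idx0b1}, invoking property (1) of Lemma \ref{lemind} with $e(t)\equiv 1$, which lies in $K''$ since it is a positive constant. I would show that $u \neq Fu + \lambda e$ for every $u \in \partial V_{\rho}$ and every $\lambda \geq 0$, from which $i_{K}(F, V_{\rho}) = 0$ follows immediately. Compactness and invariance of the cone for $F$ are granted by the remarks preceding the statement.

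The step that distinguishes this case from Lemma \ref{idx0b1} and Lemma \ref{idx0-ng} is the improved pointwise bound on $u$. Because $[a,b] = [-T,T]$, any $u \in K''$ satisfies $c\|u\| \leq u(t) \leq \|u\|$ for \emph{every} $t \in [-T,T]$. If additionally $u \in \partial V_{\rho}$, then $\min_{t \in [-T,T]} u(t) = \rho$, which yields $\|u\| \leq \rho/c$ and, crucially, $u(t) \geq \rho$ everywhere. Hence for all $s \in [-T,T]$ both $u(s)$ and $u(-s)$ lie in $[\rho, \rho/c]$, matching exactly the domain in the definition of $f_{(\rho, \rho/c)''}$. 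This is where the enhanced positivity of the kernel pays off: the second argument $u(-s)$ gets the lower bound $\rho$, not merely $0$ as in Lemma \ref{idx0-ng}.

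Suppose for contradiction that $u = Fu + \lambda e$ for some $u \in \partial V_{\rho}$ and $\lambda \geq 0$. Substituting, dropping the nonnegative constant $\lambda$, using $k \geq 0$ to estimate the integrand pointwise by $\rho\,f_{(\rho,\rho/c)''}\,k(t,s)g(s)$ on all of $[-T,T] = [a,b]$, and taking the infimum over $t \in [a,b]$ gives
\[
\rho \;=\; \min_{t \in [a,b]} u(t) \;\geq\; \rho\, f_{(\rho,\rho/c)''} \cdot \inf_{t \in [a,b]} \int_{a}^{b} k(t,s) g(s)\,ds \;=\; \rho\, \frac{f_{(\rho,\rho/c)''}}{M},
\]
which contradicts the hypothesis $f_{(\rho,\rho/c)''} > M$. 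I do not expect any real obstacle; once the domain of both slots of $f$ is correctly identified on $\partial V_{\rho}$, the chain of inequalities is a routine adaptation of the earlier proof.
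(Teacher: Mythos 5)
Your proposal is correct and follows exactly the route the paper intends: it is the proof of Lemma \ref{idx0b1} (property (1) of Lemma \ref{lemind} with $e\equiv 1$), adapted by noting that when $[a,b]=[-T,T]$ every $u\in\partial V_{\rho}$ satisfies $\rho\le u(t)\le\rho/c$ for all $t$, so that both arguments $u(s)$ and $u(-s)$ lie in $[\rho,\rho/c]$ as required by the definition of $f_{(\rho,\rho/c)''}$. The paper omits this proof precisely because it is this routine adaptation, so there is nothing to add.
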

A result similar to Theorem~~\ref{thmmsol1} holds in this case.
\begin{rem}
If $f$ is defined only on $[-T,T]\times[u_1,u_2]\times [v_1,v_2]$ we can extend it to $[-T,T]\times\bR\times\bR$ considering firstly
$$
\bar{f}(t,u,v):=
\begin{cases}
f(t,u_{1},v),\ \text{if}\ 0 \leq u \leq u_{1},\\
f(t,u,v),\;\ \text{if} \ u_{1}\leq u \leq u_{2}, \\
f(t,u_{2},v),\ \text{if} \ u_{2}\leq u < \infty,
\end{cases}
$$
and secondly
$$
\tilde{f}(t,u,v):=
\begin{cases}
\bar{f}(t,u,v_{1}),\ \text{if}\ 0 \leq v \leq v_{1},\\
\bar{f}(t,u,v),\;\ \text{if} \ v_{1}\leq v \leq v_{2}, \\
\bar{f}(t,u,v_{2}),\ \text{if} \ v_{2}\leq v < \infty.
\end{cases}
$$
\end{rem}
\begin{rem}
Note that results similar to Sections 2, 3 and 4 hold when the kernel $k$ is negative on a strip, negative and strictly negative. This gives nontrivial solutions that are negative on an interval, negative and strictly negative respectively.
\end{rem}
\section{An application}
We now turn our attention to the first order functional periodic boundary value problem
\begin{equation}\label{eqgenpro2}
u'(t)  =h(t,u(t),u(-t)),\,  t\in [-T,T],
\end{equation}
\begin{equation}\label{prdic}
u(-T)=u(T),
\end{equation}

We apply the shift argument of \cite{alb-adr} (a similar idea has been used in \cite{Tor,jwmz-na}), by
fixing $\omega \in \mathbb{R}\setminus \{0\}$ and considering the  equivalent expression
\begin{equation}\label{eqgenpro2b}
u'(t) +\omega u(-t) =h(t,u(t),u(-t))+\omega u(-t):=f(t,u(t),u(-t)),\,  t\in [-T,T],
\end{equation}
\begin{equation}\label{prdicb}
u(-T)=u(T).
\end{equation}
 Following the ideas developed in \cite{alb-adr}, we can verify that the functional boundary value problem \eqref{eqgenpro2b}-\eqref{prdicb} can be rewritten into a Hammerstein integral equation of the type
\begin{equation}\label{gensol}
u(t)=\int_{-T}^{T} k(t,s)f(s,u(s),u(-s))\,ds,
\end{equation}
Also, $k(t,s)$ can be expressed in the following way  (see \cite{alb-adr} for details):
\begin{equation}\label{gbarra}
2\sin(\omega T)k(t,s)=\begin{cases} \cos \omega (T-s-t)+\sin \omega (T+s-t) , &  t>|s|,\\\cos \omega (T-s-t)-\sin \omega (T-s+t) , &  |t|<s,\\
\cos \omega (T+s+t)+\sin \omega(T+s-t) , &  |t|<-s,\\
\cos \omega (T+s+t)-\sin \omega(T-s+t) , &  t<-|s|.\end{cases}
\end{equation}
The results that follow are meant to prove that we are under the hypothesis of Theorem \ref{thmk}.\par
The sign properties of the kernel \eqref{gbarra} can be summarized as follows:
\begin{thm}\cite{alb-adr}\label{alphasign} Let $\zeta=\omega T$.
\begin{enumerate}
\item If $\zeta\in(0,\frac{\pi}{4})$ then $k(t,s)$ is strictly positive on $[-T,T]^2$.
\item If $\zeta\in(-\frac{\pi}{4},0)$ then $k(t,s)$ is strictly negative on $[-T,T]^2$.
\item If $\zeta=\frac{\pi}{4}$ then $k(t,s)$ vanishes on ${P}:=\{(-T,-T),(0,0),(T,T),(T,-T)\}$ and is strictly positive on $([-T,T]^2)\backslash {P}$.
\item If $\zeta=-\frac{\pi}{4}$ then $k(t,s)$ vanishes on ${P}$ and is strictly negative on $([-T,T]^2)\setminus {P}$.
\item If $\zeta\in\mathbb{R}\setminus[-\frac{\pi}{4},\frac{\pi}{4}]$ then $k(t,s)$ is changes sign on $[-T,T]^2$.
\end{enumerate}
\end{thm}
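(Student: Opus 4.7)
The plan is to establish the sign behavior directly from the piecewise formula \eqref{gbarra} by rewriting each of the four branches of $2\sin(\omega T)k(t,s)$ as a product of two elementary trigonometric factors via the sum-to-product identities
\[
\cos A + \sin B = 2\cos\!\bigl(\tfrac{A-B+\pi/2}{2}\bigr)\cos\!\bigl(\tfrac{A+B-\pi/2}{2}\bigr),\qquad \cos A - \sin B = -2\sin\!\bigl(\tfrac{A-B+\pi/2}{2}\bigr)\sin\!\bigl(\tfrac{A+B-\pi/2}{2}\bigr).
\]
For example, on the branch $t>|s|$, with $A=\omega(T-s-t)$ and $B=\omega(T+s-t)$, the numerator factors as $2\cos(\pi/4-\omega s)\cos(\omega(T-t)-\pi/4)$; the three other branches give analogous products in which the linear arguments involve $\pi/4$, $\omega s$, and $\omega(T\pm t)$ with sign patterns dictated by the branch.

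Once the kernel is in product form, the sign analysis reduces to bounding the range of each argument over the corresponding branch and comparing with the sign of $\sin\zeta$ in the denominator. If $|\zeta|<\pi/4$, every argument of the form $\pi/4\pm\omega s$ or $\pm\omega(T\mp t)\mp\pi/4$ lies strictly inside an interval on which the relevant cosine or sine keeps a definite nonzero sign; each factor is then strictly nonzero, and dividing by $2\sin\zeta$ (whose sign is that of $\zeta$) yields items (1) and (2). For $\zeta=\pm\pi/4$, the arguments just reach the first zero of $\cos$ or $\sin$ precisely at the four corners/origin making up $P$, which is exactly where the product representation vanishes; strict positivity (resp.\ negativity) on the complement then gives items (3) and (4).

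For item (5) it suffices to exhibit, for each $|\zeta|>\pi/4$, two points of $[-T,T]^{2}$ where $k$ has opposite signs. For $\zeta$ slightly above $\pi/4$, on the branch $t>|s|$ the factor $\cos(\pi/4-\omega s)$ is positive at $s=0$ but negative for $s$ close to $-T$ (since $\omega s$ then drops below $-\pi/4$), while the second factor $\cos(\omega(T-t)-\pi/4)$ does not vanish; the quotient therefore changes sign. The symmetric cases $\zeta<-\pi/4$, and values of $\zeta$ farther from the origin (for which additional zeros enter the ranges), are handled analogously.

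The main obstacle is organizational rather than conceptual: one must check that the four branch formulas agree on the diagonals $t=\pm s$ so that $k$ is continuous, keep track of which of the two factors is responsible for vanishing on each branch (the roles of $s$ and $t$ swap between branches~1--2 and branches~3--4), and handle the sign of $\sin\zeta$ consistently throughout. Because the statement is cited from~\cite{alb-adr}, the most economical presentation is simply to invoke that reference for the detailed case analysis; the outline above is the path one would follow to reproduce it in self-contained form.
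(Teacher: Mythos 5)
Your factorization is exactly the product form that the paper itself records as \eqref{eqgb2} (obtained there via $\cos(a-b)\pm\sin(a+b)=(\cos a\pm\sin a)(\cos b\pm\sin b)$ and $\cos a+\sin a=\sqrt2\cos(a-\frac{\pi}{4})$), and the paper gives no proof of Theorem~\ref{alphasign} beyond citing \cite{alb-adr}, so your route — bound the arguments of each factor branch by branch and divide by $\sin\zeta$ — is essentially the intended one and the sign bookkeeping checks out. One small correction: the four branch formulas do \emph{not} agree across the diagonal $t=s$ (a first-order Green's function necessarily jumps there; the paper only asserts continuity across $z=-y$, in the proof of Lemma~\ref{lemc}), but this does not affect the sign analysis on the open branches, only the precise description of the exceptional set $P$ in items (3)--(4).
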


 In \cite{alb-adr} some existence results has been obtained for problem \eqref{eqgenpro2b}-\eqref{prdicb} when $\zeta\in[-\frac{\pi}{4},\frac{\pi}{4}]$, i.e., when the kernel $k$ has constant sign on $[-T,T]^2$. But nothing is obtained for the changing sign case.
Still, there are some things to be said about the kernel $k$ when $\zeta\in\mathbb{R}\setminus[-\frac{\pi}{4},\frac{\pi}{4}]$. First, realize that, using the trigonometric identities
$\cos(a-b)\pm\sin(a+b)=(\cos a\pm\sin a)(\cos b\pm\sin b)$ and $\cos(a)+\sin(a)=\sqrt{2}\cos(a-\frac{\pi}{4})$ and making the change of variables $t=Tz$, $s=Ty$, we can express $k$ as
\begin{equation}\label{eqgb2}
\sin(\zeta) k(z,y)=
\begin{cases}
\cos[\zeta(1-z)-\frac{\pi}{4}]\cos(\zeta y-\frac{\pi}{4}), &  z>|y|, \\
\cos(\zeta z+\frac{\pi}{4})\cos[\zeta(y-1)-\frac{\pi}{4}] , &  |z|<y, \\
\cos(\zeta z+\frac{\pi}{4})\cos[\zeta(1+y)-\frac{\pi}{4}], &  -|z|>y, \\
\cos[\zeta(z+1)+\frac{\pi}{4}]\cos(\zeta y-\frac{\pi}{4}), &  z<-|y|.
\end{cases}
\end{equation}
The following lemma relates the sign of $k$ for $\zeta$ positive and negative.
\begin{lem}\label{Gop}\cite{alb-adr} $k_\zeta(t,s)=-k_{-\zeta}(-t,-s)\sfa \, t,s\in I$ where $k_\zeta$ is the kernel for the value $\zeta$.
\end{lem}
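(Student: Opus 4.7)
The plan is to verify the identity directly from the explicit formula \eqref{gbarra}, exploiting the fact that the four pieces of that formula are paired by the involution $(t,s,\omega)\mapsto(-t,-s,-\omega)$. First I would check how the four regions transform under negation: if $(t,s)$ satisfies $t>|s|$ (case 1) then $(-t,-s)$ satisfies $-t<-|s|=-|-s|$, so $(-t,-s)$ falls in case 4; analogously cases 2 and 3 interchange. Hence, to compute $k_{-\zeta}(-t,-s)$ from \eqref{gbarra} (with $\omega$ replaced by $-\omega$), one has to read the formula off the \emph{swapped} branch in each case.

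Next I would handle one representative case explicitly and flag the remaining three as symmetric. For example, when $t>|s|$, using the case 4 branch with the substitutions $\omega\to-\omega$, $t\to-t$, $s\to-s$, and then applying the parity identities $\cos(-x)=\cos x$ and $\sin(-x)=-\sin x$, one obtains
\begin{align*}
2\sin(-\omega T)\,k_{-\zeta}(-t,-s)
&=\cos\bigl(-\omega(T-s-t)\bigr)-\sin\bigl(-\omega(T+s-t)\bigr)\\
&=\cos\omega(T-s-t)+\sin\omega(T+s-t)\\
&=2\sin(\omega T)\,k_{\zeta}(t,s),
\end{align*}
where the last equality is the case 1 branch of \eqref{gbarra}. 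Since $\sin(-\omega T)=-\sin(\omega T)$ and $\sin(\omega T)\neq 0$ (this is exactly the nonresonance condition under which \eqref{gbarra} is meaningful), dividing gives $k_{\zeta}(t,s)=-k_{-\zeta}(-t,-s)$ in this region.

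For the remaining cases I would simply repeat the calculation: case 4$\leftrightarrow$case 1 yields nothing new by symmetry of the identity in $(t,s)\leftrightarrow(-t,-s)$, while case 2$\leftrightarrow$case 3 requires applying the same parity manipulation to the mixed pair $\cos\omega(T-s-t)-\sin\omega(T-s+t)$ and $\cos\omega(T+s+t)+\sin\omega(T+s-t)$, and it works out identically. The only real obstacle is bookkeeping: one must be careful to pair each original case with the \emph{correct} branch of \eqref{gbarra} after the substitution $(-t,-s,-\omega)$, and to track the minus sign produced both by $\sin(-\omega T)=-\sin(\omega T)$ and by the odd terms $\sin(-x)=-\sin x$ inside each branch; no analytic subtlety arises beyond the parity of $\sin$ and $\cos$.
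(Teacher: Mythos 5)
Your verification is correct: the involution $(t,s,\omega)\mapsto(-t,-s,-\omega)$ swaps branch 1 with branch 4 and branch 2 with branch 3 of \eqref{gbarra}, the parity of $\sin$ and $\cos$ turns each swapped branch into the original one, and the extra sign from $\sin(-\omega T)=-\sin(\omega T)$ produces exactly the claimed identity. The paper itself offers no proof to compare against --- the lemma is simply quoted from \cite{alb-adr} --- so your direct computation is a legitimate self-contained substitute; the only point left implicit is the behaviour on the diagonals $t=\pm s$, a null set excluded from the four strict-inequality regions of \eqref{gbarra}, where the identity either holds by the continuity of $k$ across $t=-s$ or is irrelevant for the a.e.\ statements in which the kernel is used.
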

Now we have the following result.
\begin{lem}\label{posstrip} The following hold:\par
\begin{enumerate}
\item If $\zeta\in(\frac{\pi}{4},\frac{\pi}{2})$, then $k$ is strictly positive in $$S:=
\left[\(-\frac{\pi}{4|\zeta|},\frac{\pi}{4|\zeta|}-1\)\cup\(1-\frac{\pi}{4|\zeta|},\frac{\pi}{4|\zeta|}\)\right]\times[-1,1].$$
\item If $\zeta\in(-\frac{\pi}{2},-\frac{\pi}{4})$, $k$ is strictly negative in $S$.
\end{enumerate}
\end{lem}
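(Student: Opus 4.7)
The plan is to work directly with the factored expression \eqref{eqgb2}. For $\zeta\in(\pi/4,\pi/2)$ set $\alpha := \pi/(4\zeta)\in(1/2,1)$, so the $z$-projection of $S$ is the union $(-\alpha,\alpha-1)\cup(1-\alpha,\alpha)$ of two open intervals symmetric under $z\mapsto -z$. Since $\sin(\zeta)>0$, it suffices to show that, in each of the four branches of \eqref{eqgb2}, both cosine factors are strictly positive whenever $(z,y)\in S$.

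First I would handle the positive piece $z\in(1-\alpha,\alpha)$; here $z>0$ makes the fourth branch $z<-|y|$ vacuous, leaving three cases. In each I use $\zeta\alpha=\pi/4$ together with $\pi/4<\zeta<\pi/2$ to locate the cosine arguments inside the open interval $(-\pi/2,\pi/2)$. For example, in the branch $z>|y|$, from $1-z\in(1-\alpha,\alpha)$ and $|y|<z<\alpha$ one obtains $\zeta(1-z)-\pi/4\in(\zeta-\pi/2,0)$ and $\zeta y-\pi/4\in(-\pi/2,0)$, both contained in $(-\pi/2,\pi/2)$. The branches $|z|<y$ and $-|z|>y$ are handled by analogous interval arithmetic, yielding $\zeta z+\pi/4\in(\zeta,\pi/2)$ in both cases and, respectively, $\zeta(y-1)-\pi/4\in(-\pi/2,-\pi/4]$ and $\zeta(1+y)-\pi/4\in[-\pi/4,0)$.

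For the negative piece $z\in(-\alpha,\alpha-1)$ a parallel analysis applies: the first branch $z>|y|$ is vacuous, and the remaining three are checked in the same way using $\zeta z+\pi/4\in(0,\pi/2-\zeta)$ and $\zeta(z+1)+\pi/4\in(\zeta,\pi/2)$. Part (ii) then follows at once from Lemma~\ref{Gop}: the set $S$ is invariant under $(z,y)\mapsto(-z,-y)$ since the two intervals in the $z$-projection are swapped by negation and $[-1,1]$ is centrally symmetric, so for $\zeta\in(-\pi/2,-\pi/4)$ one has $-\zeta\in(\pi/4,\pi/2)$ and hence $k_\zeta(t,s)=-k_{-\zeta}(-t,-s)<0$ on $S$ by part~(i).

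The main obstacle is not conceptual but bookkeeping: six small sub-cases, each requiring a careful check that a cosine argument lies strictly inside $(-\pi/2,\pi/2)$. Particular care is needed at the borderline values $y=\pm 1$ and $y=\pm z$, where some interval endpoints collapse to $0$ or $\pm\pi/4$; these values correspond to the topological boundary of the individual branches rather than to interior points of $S$, so strict positivity is preserved throughout.
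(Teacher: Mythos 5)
Your proposal is correct and follows essentially the same route as the paper: reduce part (ii) to part (i) via Lemma~\ref{Gop} and the symmetry of $S$, then verify by interval arithmetic that every cosine factor in the factored expression \eqref{eqgb2} is strictly positive on each connected component of $S$. The paper writes out only the component $\left(1-\frac{\pi}{4\zeta},\frac{\pi}{4\zeta}\right)\times[-1,1]$ and declares the other analogous, whereas you sketch both and note the vacuous branches explicitly, but the argument is the same.
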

\begin{proof} By Lemma \ref{Gop}, it is enough to prove that $k$ is strictly positive in $S$ for $\zeta\in(\frac{\pi}{4},\frac{\pi}{2})$. We do here the proof for the connected component $\(1-\frac{\pi}{4\zeta},\frac{\pi}{4\zeta}\)\times[-1,1]$ of $S$. For the other one the proof is analogous.\par
If $z\in\(1-\frac{\pi}{4\zeta},\frac{\pi}{4\zeta}\)$, then $\zeta z+\frac{\pi}{4}\in\(\zeta,\frac{\pi}{2}\)\subset\(\frac{\pi}{4},\frac{\pi}{2}\)$, and hence $\cos\(\zeta z+\frac{\pi}{4}\)>0$.\par
Also, if $z\in\(1-\frac{\pi}{4\zeta},\frac{\pi}{4\zeta}\)$, then $\zeta(1-z)-\frac{\pi}{4}\in\(\zeta-\frac{\pi}{2},0\)\subset\(-\frac{\pi}{4},0\)$ and therefore \\ $\cos\(\zeta(1-z)-\frac{\pi}{4}\)>0$.\par
If $y\in\(-\frac{\pi}{4\zeta},\frac{\pi}{4\zeta}\)$, then $\zeta y-\frac{\pi}{4}\in\(-\frac{\pi}{2},0\)$ so $\cos\(\zeta y-\frac{\pi}{4}\)>0$.\par
If $y\in \(1-\frac{\pi}{4\zeta},1\)$, then $\zeta(y-1)-\frac{\pi}{4}\in\(-\frac{\pi}{2},-\frac{\pi}{4}\)$ so $\cos\(\zeta(y-1)-\frac{\pi}{4}\)>0$.\par
If $y\in\(-1,\frac{\pi}{4\zeta}-1\)$, then $\zeta(y+1)+\frac{\pi}{4}\in\(\frac{\pi}{4},\frac{\pi}{2}\)$ so $\cos\(\zeta(y+1)+\frac{\pi}{4}\)>0$.\par
With these inequalities the result is straightforward from equation (\ref{eqgb2}).
\end{proof}
\begin{lem}\label{max1}If $\zeta\in(\frac{\pi}{4},\frac{\pi}{2})$ then
$\sin(\zeta) |k(z,y)|\le\Phi(y):=\sin(\zeta)\,\max_{z\in[-1,1]}k(z,y)$ where $\Phi$ admits the following expression:
$$\Phi(y)=\begin{cases}
\cos\left[\zeta(y-1)-\frac{\pi}{4}\right] , &  y\in[\b,1], \\
\cos\left[\zeta(y-1)+\frac{\pi}{4}\right]\cos\(\zeta y-\frac{\pi}{4}\) , &  y\in\left[1-\frac{\pi}{4\zeta},\b\), \\
\cos\(\zeta y-\frac{\pi}{4}\) , &  y\in\left[\b-1,1-\frac{\pi}{4\zeta}\), \\
\cos\(\zeta y+\frac{\pi}{4}\)\cos\left[\zeta(y+1)-\frac{\pi}{4}\right] , &  y\in[-\frac{\pi}{4\zeta},\b-1), \\
\cos\left[\zeta(y+1)-\frac{\pi}{4}\right] , &  y\in[-1,-\frac{\pi}{4\zeta}).
\end{cases}
$$
where $\b$ is the only solution of the equation
\begin{equation}\label{eqbeta}\cos\left[\zeta(y-1)+\frac{\pi}{4}\right]\cos\(\zeta y -\frac{\pi}{4}\)-\cos\left[\zeta(y-1)-\frac{\pi}{4}\right]=0
\end{equation}
in the interval $\left[\frac{1}{2},1\right]$.

\end{lem}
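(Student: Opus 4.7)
For each fixed $y\in[-1,1]$, the function $z\mapsto\sin(\zeta)k(z,y)$ is the piecewise trigonometric expression given by (\ref{eqgb2}); on $[-1,1]$ it splits into three subintervals according to the sign of $y$. I would fix $y\ge 0$ (the case $y\le 0$ being entirely analogous) so that the three subintervals are $[-1,-y)$ (branch 4), $(-y,y)$ (branch 2), and $(y,1]$ (branch 1). In each piece the function has the form $\cos(\alpha+\gamma z)\cos(\delta)$ with $\gamma=\pm\zeta$ and $\delta$ depending only on $y$, so the only candidates for $\max_z\sin(\zeta)k(z,y)$ are the (at most one) interior critical point per piece, the endpoints $z=\pm 1$, and the one-sided limits at $z=\pm y$.

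I would first locate the critical points and determine the signs of the $y$-factors for $\zeta\in\(\frac{\pi}{4},\frac{\pi}{2}\)$. The $y$-factor $\cos\(\zeta y-\frac{\pi}{4}\)$ of branches 1 and 4 is strictly positive on $[0,1]$, while the $y$-factor $\cos[\zeta(y-1)-\frac{\pi}{4}]$ of branch 2 is negative for $y<1-\frac{\pi}{4\zeta}$ and positive for $y>1-\frac{\pi}{4\zeta}$. In branch 1 the interior critical point is $z_1=1-\frac{\pi}{4\zeta}\in\(\frac{1}{2},1\)$ and lies in $(y,1]$ iff $y<z_1$; in branch 2 the critical point $z_2=-\frac{\pi}{4\zeta}$ lies in $(-y,y)$ iff $y>\frac{\pi}{4\zeta}$. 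At both critical points the $z$-cosine attains $1$. Branch 4 has no interior critical point in $[-1,-y)$, so its maximum there is $\frac{\sqrt{2}}{2}\cos\(\zeta y-\frac{\pi}{4}\)$ at $z=-1$. Using the identity $\cos[\zeta(1-y)-\frac{\pi}{4}]=\cos[\zeta(y-1)+\frac{\pi}{4}]$, the one-sided limit of branch 1 at $z=y^+$ supplies the boundary candidate $A(y):=\cos[\zeta(y-1)+\frac{\pi}{4}]\cos\(\zeta y-\frac{\pi}{4}\)$.

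Next I would compare these candidates. For $y<1-\frac{\pi}{4\zeta}$, the branch-1 interior value $\cos\(\zeta y-\frac{\pi}{4}\)$ beats the branch-4 endpoint (smaller by a factor $\frac{\sqrt{2}}{2}$) and the branch-2 candidates (whose $y$-factor is then nonpositive), yielding the third piece of $\Phi$. For $y\in[1-\frac{\pi}{4\zeta},1]$ the dominant positive candidates are $A(y)$ and, when $y>\frac{\pi}{4\zeta}$, the branch-2 interior value $B(y):=\cos[\zeta(y-1)-\frac{\pi}{4}]$; in the subrange $[1-\frac{\pi}{4\zeta},\frac{\pi}{4\zeta}]$ the comparison reduces to $A(y)$ versus the branch-2 boundary value, and the elementary identity $\cos(a-b)\le\cos(a+b)$ (valid for $a=\zeta(y-1)\in[-\pi/2,0]$ and $b=\pi/4$) shows $A(y)$ dominates. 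The difference $A-B$ is exactly the left-hand side of (\ref{eqbeta}); direct evaluation gives $A-B>0$ at $y=\frac{1}{2}$ and $A-B<0$ at $y=1$, so the intermediate value theorem supplies a root $\beta\in\(\frac{1}{2},1\)$, and a derivative computation (using the product-to-sum simplification $A'(y)=-\zeta\sin[\zeta(2y-1)]$) shows the sign change is unique. This produces the first and second pieces of $\Phi$; the remaining two pieces come from the analogous extremization over branches 1, 3, 4 for $y<0$.

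Finally, to upgrade $\sin(\zeta)k(z,y)\le\Phi(y)$ to $\sin(\zeta)|k(z,y)|\le\Phi(y)$ I would perform the parallel minimization branch by branch. The only possibly negative contributions arise where the $z$-cosine changes sign within the admissible range; in each such case the absolute value of the minimum is bounded by $|\cos|\cdot|y\text{-factor}|\le\Phi(y)$ by the same sign and magnitude comparisons already used for the maximum. The main obstacle is the overlap region $[1-\frac{\pi}{4\zeta},1]$: pinning down $\beta$ through the sign and strict monotonicity of $A-B$ via (\ref{eqbeta}), verifying continuous gluing of the piecewise formula at $y=1-\frac{\pi}{4\zeta}$ and $y=\beta$, and duplicating the argument symmetrically for $y<0$.
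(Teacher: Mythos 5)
Your plan reproduces the paper's proof: branchwise maximization in $z$ for fixed $y$ (this is exactly the paper's majorant $\xi$ in \eqref{firstineq}, built from interior critical points and boundary values of each trigonometric branch), the same comparison of the resulting candidates over the same partition of $[-1,1]$ by the points $\pm\frac{\pi}{4\zeta}$, $1-\frac{\pi}{4\zeta}$, $\beta$, $\beta-1$, and the same intermediate-value-plus-monotonicity argument locating $\beta$ as the unique root of \eqref{eqbeta} in $[\frac12,1]$. Only two details differ: the branch-1 critical point $1-\frac{\pi}{4\zeta}$ lies in $(0,\frac12)$, not $(\frac12,1)$, for $\zeta\in(\frac{\pi}{4},\frac{\pi}{2})$ (a slip that does not affect your ordering of the breakpoints in the subsequent case analysis), and for the passage from $k$ to $|k|$ the paper invokes the uniform bound $-\sin(\zeta)k(z,y)\le\frac12\le\Phi(y)$ rather than your branchwise minimization, whose asserted inequality $|\cos|\cdot|y\text{-factor}|\le\Phi(y)$ is stated but not verified in your sketch and would still need the explicit check.
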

\begin{proof} Let
$$v(y):=\cos\left[\zeta(y-1)+\frac{\pi}{4}\right]\cos\(\zeta y -\frac{\pi}{4}\)-\cos\left[\zeta(y-1)-\frac{\pi}{4}\right],$$
then
$$v'(y)=\zeta\left[\sin\(\zeta(y-1)-\frac{\pi}{4}\)-\sin\(\zeta(2y-1)\)\right].$$
Observe that $y\in\left[\frac{1}{2},1\right]$ implies $\zeta(y-1)-\frac{\pi}{4}\in\left[-\frac{\zeta}{2}-\frac{\pi}{4},-\frac{\pi}{4}\right]\subset\left[-\frac{3\pi}{4},-\frac{\pi}{4}\right]$ and $\zeta(2y-1)\in(0,\zeta)\subset\left[0,\frac{\pi}{2}\right]$, therefore $v'(y)\le0\sfa y\in\left[\frac{1}{2},1\right]$. Furthermore, since $\zeta\in(\frac{\pi}{4},\frac{\pi}{2})$, \begin{align*}
v\(\frac{1}{2}\) & =\cos^2\(\frac{\zeta}{2}-\frac{\pi}{4}\)-\cos\(\frac{\zeta}{2}+\frac{\pi}{4}\)=1-\left[\cos\(-\frac{\zeta}{2}\)+\frac{\sqrt 2}{2}\right]\left[\sin\(-\frac{\zeta}{2}\)+\frac{\sqrt 2}{2}\right] \\ & \ge\frac{\sqrt{4-2\sqrt{2}}}{2}>0, \\ v(1) & =\frac{\sqrt 2}{2}\left[1-\cos\(\zeta-\frac{\pi}{4}\)\right]\le0.
\end{align*}
Hence, equation (\ref{eqbeta}) has a unique solution $\b$ in $\left[\frac{1}{2},1\right]$. Besides, since $v(\frac{\pi}{4\zeta})=\sqrt 2\sin(\zeta-\frac{\pi}{4})>0$, we have that $\b>\frac{\pi}{4\zeta}$. Furthermore, is easy to check that $$-1<-\frac{\pi}{4\zeta}<\b-1<\frac{\pi}{4\zeta}-1<0<1-\frac{\pi}{4\zeta}<\frac{\pi}{4\zeta}<\b<1.$$\par
Now, realize that
\begin{equation}\label{firstineq}
\sin(\zeta) k(z,y)\le\xi(z,y):=
\begin{cases}
\cos[\zeta(1-\max\{1-\frac{\pi}{4\zeta},|y|\})-\frac{\pi}{4}]\cos(\zeta y-\frac{\pi}{4}), &  z>|y|, \\
\cos(\zeta\min\{\frac{\pi}{4\zeta},y\}-\frac{\pi}{4})\cos[\zeta(y-1)-\frac{\pi}{4}] , &  |z|<y, \\
\cos(\zeta\max\{-\frac{\pi}{4\zeta},y\}+\frac{\pi}{4})\cos[\zeta(1+y)-\frac{\pi}{4}], &  -|z|>y, \\
\frac{\sqrt 2}{2}\cos(\zeta y-\frac{\pi}{4}), &  z<-|y|,
\end{cases}
\end{equation}
while $\xi(z,y)\le\Phi(y)$.

We study now the different cases for the value of $y$.\newline\par
$\bullet$ If $y\in[\b,1]$, then

\begin{subnumcases}{\label{xi1} \xi(z,y)=}
  \label{xi1a} \cos\left[\zeta(y-1)+\frac{\pi}{4}\right]\cos\(\zeta y-\frac{\pi}{4}\), & $z>y,$  \\
 \label{xi1b} \cos\left[\zeta(y-1)-\frac{\pi}{4}\right], & $ |z|<y,$ \\
 \label{xi1c} \frac{\sqrt 2}{2}\cos\(\zeta y-\frac{\pi}{4}\), & $ z<-y.$
 \end{subnumcases}

It is straightforward that $\cos[\zeta(y-1)+\frac{\pi}{4}]>\cos(\frac{\pi}{4})=\frac{\sqrt 2}{2}$, so (\ref{xi1a})$>$(\ref{xi1c}). By our study of equation (\ref{eqbeta}), it is clear that
$$\cos\left[\zeta(y-1)+\frac{\pi}{4}\right]\cos\(\zeta y-\frac{\pi}{4}\)\le\cos\left[\zeta(y-1)-\frac{\pi}{4}\right].$$
Therefore (\ref{xi1a})$\ge$(\ref{xi1b}) and $\Phi(y)=\cos\left[\zeta(y-1)-\frac{\pi}{4}\right]$.\newline\par

$\bullet$ If $y\in\left[\frac{\pi}{4\zeta},\b\)$, then $\xi$ is as in (\ref{xi1}) and (\ref{xi1a})$>$(\ref{xi1c}), but in this case
$$\cos\left[\zeta(y-1)+\frac{\pi}{4}\right]\cos\(\zeta y-\frac{\pi}{4}\)\ge\cos\left[\zeta(y-1)-\frac{\pi}{4}\right],$$
so  (\ref{xi1a})$\le$(\ref{xi1b}) and $\Phi(y)=\cos\left[\zeta(y-1)+\frac{\pi}{4}\right]\cos\(\zeta y-\frac{\pi}{4}\)$.\newline\par

$\bullet$ If $y\in\left[1-\frac{\pi}{4\zeta},\frac{\pi}{4\zeta}\)$, then

\begin{subnumcases}{\label{xi2} \xi(z,y)=}
  \label{xi2a} \cos\left[\zeta(y-1)+\frac{\pi}{4}\right]\cos\(\zeta y-\frac{\pi}{4}\), & $z>y,$  \\
 \label{xi2b} \cos\left[\zeta(y-1)-\frac{\pi}{4}\right]\cos\(\zeta y-\frac{\pi}{4}\), & $ |z|<y,$ \\
 \label{xi2c} \frac{\sqrt 2}{2}\cos\(\zeta y-\frac{\pi}{4}\), & $ z<-y.$
 \end{subnumcases}
We have that
$$\cos\left[\zeta(y-1)+\frac{\pi}{4}\right]-\cos\left[\zeta(y-1)-\frac{\pi}{4}\right]=\sqrt 2\sin[\zeta(1-y)]>0,$$
therefore (\ref{xi2a})$\ge$(\ref{xi2b}) and $\Phi(y)=\cos[\zeta(y-1)+\frac{\pi}{4}]\cos(\zeta y-\frac{\pi}{4})$.\newline\par

$\bullet$ If $y\in\left[0,1-\frac{\pi}{4\zeta}\)$, then
\begin{subnumcases}{\label{xi3} \xi(z,y)=}
  \label{xi3a} \cos\(\zeta y-\frac{\pi}{4}\), & $z>y,$  \\
 \label{xi3b} \cos\left[\zeta(y-1)-\frac{\pi}{4}\right]\cos\(\zeta y-\frac{\pi}{4}\), & $ |z|<y,$ \\
 \label{xi3c} \frac{\sqrt 2}{2}\cos\(\zeta y-\frac{\pi}{4}\), & $ z<-y.$
 \end{subnumcases}
 $\cos\left[\zeta(y-1)-\frac{\pi}{4}\right]<\frac{\sqrt 2}{2}$, so (\ref{xi3b})$\le$(\ref{xi3c})$\le$(\ref{xi3a}) and $\Phi(y)=\cos\(\zeta y-\frac{\pi}{4}\)$.\newline\par

 $\bullet$ If $y\in\left[\b-1,0\)$, then

\begin{subnumcases}{\label{xi4} \xi(z,y)=}
  \label{xi4a} \cos\(\zeta y-\frac{\pi}{4}\), & $z>-y,$  \\
 \label{xi4b} \cos\(\zeta y+\frac{\pi}{4}\)\cos\left[\zeta(1+y)-\frac{\pi}{4}\right], & $ -|z|>y,$ \\
 \label{xi4c} \frac{\sqrt 2}{2}\cos\(\zeta y-\frac{\pi}{4}\), & $ z<y.$
 \end{subnumcases}
Let $y=\ol y-1$, then
$$\cos\(\zeta y+\frac{\pi}{4}\)\cos\left[\zeta(1+y)-\frac{\pi}{4}\right]\le \cos\(\zeta y-\frac{\pi}{4}\)$$ if and only if $$\cos\left[\zeta (\ol y-1)+\frac{\pi}{4}\right]\cos\(\zeta\ol y-\frac{\pi}{4}\)\le \cos\left[\zeta(\ol y-1)-\frac{\pi}{4}\right]$$
which is true as $\ol y\in[\b,1)$ and our study of equation (\ref{eqbeta}). Hence, $\Phi(y)=\cos\(\zeta y-\frac{\pi}{4}\)$.\newline\par

 $\bullet$ If $y\in\left[\frac{\pi}{4\zeta}-1,\b-1\)$, then

$\xi$ is the same as in (\ref{xi4}) but in this case $$\cos\(\zeta y+\frac{\pi}{4}\)\cos\left[\zeta(1+y)-\frac{\pi}{4}\right]\ge \cos\(\zeta y-\frac{\pi}{4}\)$$ so $\Phi(y)=\cos\(\zeta y+\frac{\pi}{4}\)\cos\left[\zeta(1+y)-\frac{\pi}{4}\right]$.\newline\par

 $\bullet$ If $y\in\left[-\frac{\pi}{4\zeta},\frac{\pi}{4\zeta}-1\)$, then
\begin{subnumcases}{\label{xi5} \xi(z,y)=}
  \label{xi5a} \cos\left[\zeta(1-y)-\frac{\pi}{4}\right]\cos\(\zeta y-\frac{\pi}{4}\), & $z>-y,$  \\
 \label{xi5b} \cos\(\zeta y+\frac{\pi}{4}\)\cos\left[\zeta(1+y)-\frac{\pi}{4}\right], & $ -|z|>y,$ \\
 \label{xi5c} \frac{\sqrt 2}{2}\cos\(\zeta y-\frac{\pi}{4}\), & $ z<y.$
 \end{subnumcases}
$$\cos\(\zeta y+\frac{\pi}{4}\)\cos\left[\zeta(1+y)-\frac{\pi}{4}\right]-\cos\left[\zeta(1-y)-\frac{\pi}{4}\right]\cos\(\zeta y-\frac{\pi}{4}\)\\ =-\sin\zeta\sin(2\zeta y)>0,$$
then $\Phi(y)=\cos\(\zeta y+\frac{\pi}{4}\)\cos\left[\zeta(1+y)-\frac{\pi}{4}\right]$.\newline\par

 $\bullet$ If $y\in\left[-1,-\frac{\pi}{4\zeta}\)$, then
\begin{subnumcases}{\label{xi6} \xi(z,y)=}
  \label{xi6a} \cos\left[\zeta(1-y)-\frac{\pi}{4}\right]\cos\(\zeta y-\frac{\pi}{4}\), & $z>-y,$  \\
 \label{xi6b} \cos\left[\zeta(1+y)-\frac{\pi}{4}\right], & $ -|z|>y,$ \\
 \label{xi6c} \frac{\sqrt 2}{2}\cos\(\zeta y-\frac{\pi}{4}\), & $ z<y.$
 \end{subnumcases}
Since
$$\cos\left[\zeta(1+y)-\frac{\pi}{4}\right]\ge\cos\(\zeta y+\frac{\pi}{4}\)\cos\left[\zeta(1+y)-\frac{\pi}{4}\right]>\cos\left[\zeta(1-y)-\frac{\pi}{4}\right]\cos\(\zeta y-\frac{\pi}{4}\),$$
$\Phi(y)=\cos\left[\zeta(1+y)-\frac{\pi}{4}\right]$.

It is easy to check, just studying the arguments of the cosines involved, that $-\sin(\zeta) k(z,y)\le\frac{1}{2}\le\Phi(y)$, therefore $\sin(\zeta) |k(z,y)|\le\Phi(y)$ for all $z,y\in[-1,1]$.
\end{proof}
We know give a technical lemma that will be used afterwards.
\begin{lem}\label{midpoint} Let $f:[p-c,p+c]\to\bR$ be a symmetric function with respect to $p$, decreasing in $[p,p+c]$. Let $g:[a,b]\to\bR$ be an affine function such that $g([a,b])\subset[p-c,p+c]$. Under these hypothesis, the following hold.
\begin{enumerate}
\item If $g(a)<g(b)<p$ or $p<g(b)<g(a)$ then $f(g(a))<f(g(b))$,
\item if $g(b)<g(a)<p$ or $p<g(a)<g(b)$ then $f(g(a))>f(g(b))$,
\item if $g(a)<p<g(b)$ then $f(g(a))<f(g(b))$ if and only if $g(\frac{a+b}{2})<p$,
\item if $g(b)<p<g(a)$ then $f(g(a))<f(g(b))$ if and only if $g(\frac{a+b}{2})>p$.
\end{enumerate}
\end{lem}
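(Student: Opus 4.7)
The plan is to reduce all four cases to a single observation: since $f$ is symmetric about $p$ and strictly decreasing on $[p,p+c]$, by symmetry it is strictly increasing on $[p-c,p]$, and therefore for any $x,y\in[p-c,p+c]$ one has $f(x)<f(y)$ if and only if $|x-p|>|y-p|$. Thus every comparison between $f(g(a))$ and $f(g(b))$ reduces to a comparison between the two distances $|g(a)-p|$ and $|g(b)-p|$.

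For items (1) and (2), the values $g(a)$ and $g(b)$ lie on the same side of $p$, so the absolute values collapse to linear expressions. In item (1), either $g(a)<g(b)<p$ yields $p-g(a)>p-g(b)$, or $p<g(b)<g(a)$ yields $g(a)-p>g(b)-p$; in either case $|g(a)-p|>|g(b)-p|$, and hence $f(g(a))<f(g(b))$. Item (2) follows identically with the strict inequalities reversed on each side of $p$.

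For items (3) and (4), $g(a)$ and $g(b)$ lie on opposite sides of $p$, so the inequality $|g(a)-p|>|g(b)-p|$ unfolds into $p-g(a)>g(b)-p$ in case (3) and into $g(a)-p>p-g(b)$ in case (4), which rearrange respectively to $g(a)+g(b)<2p$ and $g(a)+g(b)>2p$. The only step beyond bookkeeping is to invoke that $g$ is affine, which gives $g\!\left(\tfrac{a+b}{2}\right)=\tfrac{g(a)+g(b)}{2}$; substituting then yields the equivalences claimed in (3) and (4). There is essentially no obstacle beyond this affinity identity, which is precisely the reason the midpoint of $[a,b]$ appears in the statement.
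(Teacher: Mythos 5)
Your proof is correct and follows essentially the same route as the paper's: both reduce every case to the equivalence $f(g(a))<f(g(b))\iff |g(a)-p|>|g(b)-p|$, dispatch (1) and (2) by noting the values lie on the same side of $p$, and settle (3) and (4) by unfolding the absolute values and using the affinity identity $g\!\left(\tfrac{a+b}{2}\right)=\tfrac{g(a)+g(b)}{2}$. No substantive differences.
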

\begin{rem} An analogous result can be established, with the proper changes in the inequalities, if $f$ is increasing in $[p,p+c]$.\end{rem}
\begin{proof}
It is clear that $f(g(a))<f(g(b))$ if and only if $|g(a)-p|>|g(b)-p|$, so $(1)$ and $(2)$ are straightforward. Also, realize that, since $g$ is affine, we have that $g\(\frac{a+b}{2}\)=\frac{g(a)+g(b)}{2}$.\par
Let us prove $(3)$ as $(4)$ is analogous:
$$|g(b)-p|-|g(a)-p|=g(b)-p-(p-g(a))=g(a)+g(b)-2p=2\left[g\(\frac{a+b}{2}\)-p\right].$$
Therefore $|g(a)-p|>|g(b)-p|$ if and only if $g\(\frac{a+b}{2}\)<p$.
\end{proof}
\begin{lem}\label{lemc} Let $\zeta\in(0,\frac{\pi}{4}]$ and $b\ge a\ge0$ such that $a+b=1$. Then $$\sin\(\zeta\)k\(z,y\)\ge c(a)\Phi\(y\)\text{ for }z\in\left[a,b\right],\ y\in\left[-1,1\right],$$ where
$$c(a):=\inf_{y\in[-1,1]}\left\{\frac{\sin(\zeta)\inf\limits_{z\in[a,b]}k(z,y)}{\Phi(y)}\right\}=\frac{[1-\tan(\zeta a)][1-\tan(\zeta b)]}{[1+\tan(\zeta a)][1+\tan(\zeta b)]}.$$
\end{lem}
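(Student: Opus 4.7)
The plan is a direct piecewise analysis exploiting the four-piece formula (\ref{eqgb2}) and the monotonicity in $z$ of each piece. Since $a+b=1$ with $0\le a\le b$, the interval $[a,b]$ lies in $[0,1]$, so as $z$ varies in $[a,b]$ only pieces one, two and three of (\ref{eqgb2}) can be active (pieces one and two for $y\ge 0$, pieces one and three for $y\le 0$). First I would verify, by a short differentiation, that for $\zeta\in(0,\pi/4]$ and $z\in[a,b]$ each of these three relevant pieces of $\sin(\zeta)k(\cdot,y)$ is decreasing in $z$: indeed $\zeta(1-z)-\pi/4\in[-\pi/4,0]$ forces the sine of that argument to be non-positive in piece one, while $\zeta z+\pi/4\in[\pi/4,\pi/2]$ makes $\cos(\zeta z+\pi/4)$ decreasing in pieces two and three, and the $y$-dependent cosine factors remain non-negative for $y\in[-1,1]$. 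Consequently $\inf_{z\in[a,b]}\sin(\zeta)k(z,y)$ is attained either at the right endpoint $z=b$ or as the left-limit at the jump $z=|y|$ when $|y|\in(a,b)$.

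Applied to the whole range $z\in[-1,1]$, the same monotonicity identifies the maxima of each piece and, after comparison and the use of cosine parity, yields
\[
\Phi(y) \;=\; \sin(\zeta)\,\cos\!\left(\zeta|y|-\tfrac{\pi}{4}\right)\cos\!\left(\zeta(1-|y|)-\tfrac{\pi}{4}\right).
\]
I would then split $y\in[-1,1]$ at the breakpoints $-b<-a\le 0\le a<b$ and, in each subregion, write $\sin(\zeta)\inf_{z\in[a,b]}k(z,y)$ as the minimum of the (at most two) explicit trigonometric candidates and form the quotient with $\Phi(y)$. The identity $\cos(\alpha)\pm\sin(\alpha)=\sqrt{2}\cos(\alpha\mp\pi/4)$, already used in passing from (\ref{gbarra}) to (\ref{eqgb2}), cancels common factors and reduces each candidate ratio to a product of two cosines with arguments shifted by $\pm\pi/4$.

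To minimize the quotient over $y$, I would note that in each subregion the ratio is either constant in $y$ or monotone in $y$, so it suffices to compare the finitely many values at the breakpoints; Lemma \ref{midpoint}, applied to $x\mapsto\cos(x-\pi/4)$, handles these comparisons, and the mirror symmetry supplied by Lemma \ref{Gop} halves the work by relating $y<0$ to $y>0$. I expect the minimum to be attained at $y=\pm b$, where the left-limit $z\to b^-$ (in piece two for $y=b$) is the smaller of the two $z$-candidates and gives, after using $a=1-b$ and parity of cosine,
\[
\sin(\zeta)\inf_{z\in[a,b]}k(z,b)=\cos\!\left(\zeta b+\tfrac{\pi}{4}\right)\cos\!\left(\zeta a+\tfrac{\pi}{4}\right),\qquad \Phi(b)=\sin(\zeta)\cos\!\left(\zeta a-\tfrac{\pi}{4}\right)\cos\!\left(\zeta b-\tfrac{\pi}{4}\right).
\]
A final application of $\cos(x+\pi/4)/\cos(x-\pi/4)=(1-\tan x)/(1+\tan x)$ then recasts the quotient in the stated tangent form.

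The main obstacle is the bookkeeping in the previous step: in the intermediate $y$-regions $|y|\in(a,b)$ there are two competing candidates for the $z$-infimum (the endpoint value at $z=b$ and the left-limit at the interior jump $z=|y|$), and one has to verify case by case that neither produces a smaller quotient than the value attained at $y=\pm b$. This is precisely the setting in which Lemma \ref{midpoint} is designed to do the heavy lifting, turning each such comparison into an inspection of where the midpoint of an affine argument falls relative to the symmetry axis of $\cos(\cdot-\pi/4)$.
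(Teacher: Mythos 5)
Your strategy is essentially the paper's: determine $\Phi(y)=\sin(\zeta)\max_{z}k(z,y)$ and $\Psi(y)=\sin(\zeta)\inf_{z\in[a,b]}k(z,y)$ from the product-of-cosines form \eqref{eqgb2}, form the quotient $\Psi/\Phi$ region by region in $y$, and locate its infimum at $y=b$, where $\cos(x+\pi/4)/\cos(x-\pi/4)=(1-\tan x)/(1+\tan x)$ yields the stated $c(a)$. The one genuinely different ingredient is how you locate the $z$-extrema: you check that for $\zeta\le\pi/4$ every active piece of $\sin(\zeta)k(\cdot,y)$ is decreasing in $z$, so the infimum sits at $z=b$ or at the left-limit of the jump $z=|y|$; the paper instead derives $\partial^{2}k/\partial t^{2}=-\omega^{2}k<0$ on the positivity strip and uses Lemma \ref{midpoint} to decide which endpoint of each differentiable subregion wins. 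That heavier machinery is there because the proof printed under Lemma \ref{lemc} is actually written for $\zeta\in(\pi/4,\pi/2)$ (it invokes Lemma \ref{posstrip} and the five-piece $\Phi$ of Lemma \ref{max1}; the hypothesis $\zeta\in(0,\pi/4]$ there looks like a typo), whereas your argument corresponds to the paper's subsequent, simpler lemma for $\zeta\in(0,\pi/4]$ — and for that range your monotonicity route is correct and arguably cleaner. Three slips to repair. First, your displayed $\Phi$ carries a spurious factor $\sin(\zeta)$: since $\sin(\zeta)k$ is already the product of two cosines, $\Phi(y)=\cos(\zeta|y|-\pi/4)\cos(\zeta(1-|y|)-\pi/4)$ with no extra factor, and keeping it would leave you with $c(a)/\sin\zeta$ rather than $c(a)$. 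Second, Lemma \ref{Gop} relates $k_{\zeta}$ to $k_{-\zeta}$, not $y$ to $-y$ for fixed $\zeta$, so it cannot halve the case analysis; the negative-$y$ regions must be done directly, and the minimum is attained only at $y=+b$ (at $y=-b$ the quotient equals $(1-\tan\zeta b)/(1+\tan\zeta b)\ge c(a)$). Third, on $y\in[a,b]$ the quotient is not monotone — it has a single interior maximum at $y=1/2$ — but since its only critical point is a maximum, the endpoint comparison you propose still yields the infimum there.
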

\begin{proof} We know by Lemma \ref{posstrip} that $k$ is positive in $S_r:=[a,b]\times[-1,1]$. Furthermore, it is proved in \cite{alb-adr} that
$$\frac{\partial k}{\partial t}(t,s)+m\,k(-t,s)=0\sfa t,s\in[-T,T],$$
so, differentiating and doing the proper substitutions we get that
$$\frac{\partial^2 k}{\partial t^2}(t,s)+m^2k(t,s)=0\sfa t,s\in[-T,T].$$
Therefore, $\frac{\partial^2 k}{\partial t^2}<0$ in $S_r$, which means that any minimum of $k$ with respect to $t$ has to be in the boundary of the differentiable regions of $S_r$. Thus, it is clear that, in $S_r$,
$$\sin(\zeta) k(z,y)\ge\eta(z,y):=$$
\begin{equation}
\begin{cases}
\cos([\max\{|\zeta a+\frac{\pi}{4}|,|\zeta b+\frac{\pi}{4}|\})\cos[\zeta(y-1)-\frac{\pi}{4}] , &  |z|<y,\ y\in[b,1],\\
\cos([\max\{|\zeta a+\frac{\pi}{4}|,|\zeta y+\frac{\pi}{4}|\})\cos[\zeta(y-1)-\frac{\pi}{4}] , &  |z|<y,\ y\in[a,b),\\
\cos[\max\{|\zeta(1-y)-\frac{\pi}{4}|,|\zeta(1-b)-\frac{\pi}{4}|]\cos(\zeta y-\frac{\pi}{4}), &  z>|y|,\ y\in[a,b),\\
\cos[\max\{|\zeta(1-a)-\frac{\pi}{4}|,|\zeta(1-b)-\frac{\pi}{4}|]\cos(\zeta y-\frac{\pi}{4}), &  z>|y|,\ y\in[-a,a),\\
\cos[\max\{|\zeta(1-y)-\frac{\pi}{4}|,|\zeta(1-b)-\frac{\pi}{4}|]\cos(\zeta y-\frac{\pi}{4}), &  z>|y|,\ y\in[-b,-a),\\
\cos([\max\{|\zeta a+\frac{\pi}{4}|,|\zeta y+\frac{\pi}{4}|\})\cos[\zeta(1+y)-\frac{\pi}{4}], &  -|z|>y,\ y\in[-b,-a),\\
\cos([\max\{|\zeta a+\frac{\pi}{4}|,|\zeta b+\frac{\pi}{4}|\})\cos[\zeta(1+y)-\frac{\pi}{4}], &  -|z|>y,\ y\in[-1,-b).
\end{cases}
\end{equation}
\par
By definition, $\eta(z,y)\ge\Psi(y):=\sin(\zeta)\,\inf_{z\in[a,b]}k(z,y)$. Also, realize that the arguments of the cosine in (\ref{eqgb2}) are affine functions and that the cosine function is strictly decreasing in $[0,\pi]$ and symmetric with respect to zero. We can apply Lemma \ref{midpoint} to get
\begin{subnumcases}{\label{eta1}\eta\(z,y\)=}
\label{eta1a}\cos\(\zeta b+\frac{\pi}{4}\)\cos\left[\zeta\(y-1\)-\frac{\pi}{4}\right], & $ |z|<y,\ y\in\left[b,1\right],$\\
\label{eta1b}\cos\(\zeta y+\frac{\pi}{4}\)\cos\left[\zeta\(y-1\)-\frac{\pi}{4}\right], & $ |z|<y,\ y\in\left[a,b\),$\\
\label{eta1c}\cos\(\zeta\(1-b\)-\frac{\pi}{4}\)\cos\(\zeta y-\frac{\pi}{4}\)& $\text{if}\quad z>|y|,\ y\in\left[-b,b\),$\\
\label{eta1d}\cos\(\zeta y+\frac{\pi}{4}\)\cos\left[\zeta\(1+y\)-\frac{\pi}{4}\right]& $\text{if}\quad -|z|>y,\ y\in\left[-b,-a\),$\\
\label{eta1e}\cos\(\zeta b+\frac{\pi}{4}\)\cos\left[\zeta\(1+y\)-\frac{\pi}{4}\right]& $\text{if}\quad -|z|>y,\ y\in\left[-1,-b\).$
\end{subnumcases}

\par
Finally, we have to compare the cases \eqref{eta1b} with \eqref{eta1c} for $y\in[a,b)$ and \eqref{eta1d} with \eqref{eta1c} for $y\in[-b,-a)$. Using again Lemma \ref{midpoint}, we obtain the following inequality.
\begin{align*}& \cos\(\zeta\(1-b\)-\frac{\pi}{4}\)\cos\(\zeta y-\frac{\pi}{4}\)-\cos\(\zeta y+\frac{\pi}{4}\)\cos\left[\zeta\(y-1\)-\frac{\pi}{4}\right]\ge\\ & \cos\(\zeta\(1-b\)-\frac{\pi}{4}\)\cos\(\zeta b-\frac{\pi}{4}\)-\cos\(\zeta b+\frac{\pi}{4}\)\cos\left[\zeta\(b-1\)-\frac{\pi}{4}\right]=\sin\zeta>0.\end{align*}
Thus, \eqref{eta1c}$>$\eqref{eta1b} for $y\in[a,b)$.\par
To compare \eqref{eta1d} with \eqref{eta1c} for $y\in[-b,b)$ realize that $k$ is continuous in the diagonal $z=-y$ (see \cite{alb-adr}). Hence, since the expressions of \eqref{eta1d} and \eqref{eta1c} are already locally minimyzing (in their differentiable components) for the variable $z$, it is clear that \eqref{eta1d}$\ge$\eqref{eta1c} for $y\in[-b,-a)$. Therefore,
\begin{subnumcases}{\label{eta2}\Psi\(y\)=}
\label{eta2a}\cos\(\zeta b+\frac{\pi}{4}\)\cos\left[\zeta\(y-1\)-\frac{\pi}{4}\right], & $ y\in\left[b,1\right],$\\
\label{eta2b}\cos\(\zeta y+\frac{\pi}{4}\)\cos\left[\zeta\(y-1\)-\frac{\pi}{4}\right], & $ y\in\left[a,b\),$\\
\label{eta2c}\cos\(\zeta\(1-b\)-\frac{\pi}{4}\)\cos\(\zeta y-\frac{\pi}{4}\)& $\text{if}\quad y\in\left[-b,a\),$\\
\label{eta2e}\cos\(\zeta b+\frac{\pi}{4}\)\cos\left[\zeta\(1+y\)-\frac{\pi}{4}\right]& $\text{if}\quad y\in\left[-1,-b\).$
\end{subnumcases}
It is easy to check that the following order holds:
$$-1<-\frac{\pi}{4\zeta}<-b<\b-1<1-\frac{\pi}{4\zeta}<a<b<\b<1.$$
Thus, we get the following expression
\begin{subnumcases}{\label{quo1}\frac{\Psi\(y\)}{\Phi(y)}=}
\label{quo1a}\cos\(\zeta b+\frac{\pi}{4}\), & $ y\in\left[\b,1\right],$\\
\label{quo1b}\frac{\cos\(\zeta b+\frac{\pi}{4}\)\cos\(\zeta(y-1)-\frac{\pi}{4}\)}{\cos\(\zeta y-\frac{\pi}{4}\)\cos\(\zeta(y-1)+\frac{\pi}{4}\)}, & $ y\in\left[b,\b\),$\\
\label{quo1c}\frac{\cos\(\zeta y+\frac{\pi}{4}\)\cos\(\zeta(y-1)-\frac{\pi}{4}\)}{\cos\(\zeta y-\frac{\pi}{4}\)\cos\(\zeta(y-1)+\frac{\pi}{4}\)}, & $ y\in\left[a,b\),$\\
\label{quo1d}\frac{\cos\(\zeta(1-b)-\frac{\pi}{4}\)}{\cos\(\zeta(y-1)+\frac{\pi}{4}\)}, & $ y\in\left[1-\frac{\pi}{4\zeta},a\),$\\
\label{quo1e}\cos\(\zeta(1-b)-\frac{\pi}{4}\), & $ y\in\left[\b-1,1-\frac{\pi}{4\zeta}\),$\\
\label{quo1f}\frac{\cos\(\zeta(1-b)-\frac{\pi}{4}\)\cos\(\zeta y-\frac{\pi}{4}\)}{\cos\(\zeta y+\frac{\pi}{4}\)\cos\(\zeta(1+y)-\frac{\pi}{4}\)}, & $ y\in\left[-b,\b-1\),$\\
\label{quo1g}\frac{\cos\(\zeta b+\frac{\pi}{4}\)}{\cos\(\zeta y+\frac{\pi}{4}\)}, & $ y\in\left[-\frac{\pi}{4\zeta},-b\),$\\
\label{quo1h}\cos\(\zeta b+\frac{\pi}{4}\), & $ y\in\left[-1,-\frac{\pi}{4\zeta}\).$
\end{subnumcases}\par
To find the infimum of this function we will go through several steps in which we discard different cases. First, it is easy to check the inequalities \eqref{quo1g}$\ge$\eqref{quo1h}$=\eqref{quo1a}$ and \eqref{quo1d}$\ge$\eqref{quo1e}, so we need not to think about \eqref{quo1d}, \eqref{quo1g} and \eqref{quo1h} anymore.\par
Now, realize that $|\zeta(1-b)-\frac{\pi}{4}|\le|\zeta b+\frac{\pi}{4}|\le\pi$. Since the cosine is decreasing in $[0, \pi]$ and symmetric with respect to zero this implies that \eqref{quo1e}$\ge$\eqref{quo1a}.\par
Note that \eqref{quo1c} can be written as
$$g_1(y):=\frac{[1-\tan(\zeta y)](1-\tan[\zeta(1-y)])}{[1+\tan(\zeta y)](1+\tan[\zeta(1-y)])}.$$
Its derivative is
$$g_1'(y)=-\frac{4\zeta[\tan^2(\zeta y)-\tan^2 \zeta(y-1)]}{(\tan \zeta y+1)^2[\tan \zeta(y-1)]^2},$$
which only vanishes at $y=\frac{1}{2}$ for $y\in[a,b]$.
$$g''_1\(\frac{1}{2}\)=-\frac{16\zeta^2\tan\(\frac{\zeta}{2}\)\(\tan^2\frac{\zeta}{2}+1\)}{\(\tan\frac{\zeta}{2}+1\)^4}<0,$$
Therefore $y=\frac{1}{2}$ is a maximum of the function. Since $g_1$ is symmetric with respect to $\frac{1}{2}$ and $a$ is the symmetric point of $b$ with respect to $\frac{1}{2}$, $g(a)=g(b)$ is the infimum of \eqref{quo1c} which is contemplated in \eqref{quo1b} for $y=b$.\par
Making the change of variables $y=\ol y-1$ we have that \eqref{quo1f} can be written as
\begin{equation}\label{quo1f'}\tag{\ref{quo1f}'}\frac{\cos\(\zeta(1-b)-\frac{\pi}{4}\)\cos\(\zeta(\ol y-1)-\frac{\pi}{4}\)}{\cos\(\zeta \ol y-\frac{\pi}{4}\)\cos\(\zeta(\ol y-1)+\frac{\pi}{4}\)}\quad\text{if}\quad \ol y\in\left[a,\b\).\end{equation}
Since \eqref{quo1e}$\ge$\eqref{quo1a}, it is clear now that \eqref{quo1f'}$\ge$\eqref{quo1b} in $[b,\b)$.\par
Let
$$g_2(y):=\frac{\cos\(\zeta(y-1)-\frac{\pi}{4}\)}{\cos\(\zeta y-\frac{\pi}{4}\)\cos\(\zeta(y-1)+\frac{\pi}{4}\)}.$$
Then
$$g_2'(y)=\frac{\zeta}{4}\cdot\frac{\sin\left[\zeta(2-y)-\frac{\pi}{4}\right]+\sin\left[\zeta(3y-2)-\frac{\pi}{4}\right]+4\cos\left[\zeta y-\frac{\pi}{4}\right]}{\sin^2\left[\zeta y+\frac{\pi}{4}\right]\cos^2\left[\zeta(1-y)-\frac{\pi}{4}\right]^2}.$$
Since the argument in the cosine of the numerator is in the interval $[-\frac{\pi}{4},\frac{\pi}{4}]$ for $y\in[a,1]$, it is clear that $g_2'(y)>0$ for $y\in[a,1]$, which implies that $g_2$ is increasing  in that interval and \eqref{quo1b} and \eqref{quo1f} reach their infimum in the left extreme point of their intervals of definition.\par
We have then that
$$c(a)=\inf\limits_{y\in[-1,1]}\frac{\Psi(y)}{\Phi(y)}=\min\left\{\cos\(\zeta b+\frac{\pi}{4}\),\frac{\cos\(\zeta b+\frac{\pi}{4}\)\cos\(\zeta(b-1)-\frac{\pi}{4}\)}{\cos\(\zeta b-\frac{\pi}{4}\)\cos\(\zeta(b-1)+\frac{\pi}{4}\)},\frac{\cos\(-\zeta b-\frac{\pi}{4}\)}{\cos\(-\zeta b+\frac{\pi}{4}\)}\right\}.$$
The third element of the set is clearly greater or equal than the first. The second element is $\cos\(\zeta b+\frac{\pi}{4}\)g_2(b)$. Since $g_2$ is increasing in $[a,1]$,
$$\cos\(\zeta b+\frac{\pi}{4}\)g_2(b)\le\cos\(\zeta b+\frac{\pi}{4}\)g_2(1)=\cos\(\zeta b+\frac{\pi}{4}\)\frac{\cos(\zeta)}{\sin(\zeta)}\le\cos\(\zeta b+\frac{\pi}{4}\).$$
Therefore,
$$c(a)=\frac{\cos\(\zeta b+\frac{\pi}{4}\)\cos\(\zeta(b-1)-\frac{\pi}{4}\)}{\cos\(\zeta b-\frac{\pi}{4}\)\cos\(\zeta(b-1)+\frac{\pi}{4}\)}=\frac{[1-\tan(\zeta a)][1-\tan(\zeta b)]}{[1+\tan(\zeta a)][1+\tan(\zeta b)]}.$$
\end{proof}
\begin{rem}It is easy to find an upper estimate of $c(a)$. Just assume $a=b=\frac{1}{2}$.
$$c(a)\le c(0)=\(\frac{1-\tan\frac{\zeta}{2}}{1+\tan\frac{\zeta}{2}}\)^2\le\(\frac{1-\tan\frac{\pi}{8}}{1+\tan\frac{\pi}{8}}\)^2=\frac{(2-\sqrt 2)^2}{2}= 0.17157\dots$$
\end{rem}

\par We can do the same study for $\zeta\in(0,\frac{\pi}{4}]$. The proofs are almost the same, but in this case the calculations are much easier.
\begin{lem}If $\zeta\in(0,\frac{\pi}{4}]$ then
$\sin(\zeta) |k(z,y)|\le\Phi(y):=\max_{z\in[-1,1]}k(z,y)$ where $\Phi$ admits the following expression:
$$\Phi(y)=\begin{cases}
\cos\left[\zeta(y-1)+\frac{\pi}{4}\right]\cos\(\zeta y-\frac{\pi}{4}\) , &  y\in\left[0,1\right] \\
\cos\(\zeta y+\frac{\pi}{4}\)\cos\left[\zeta(y+1)-\frac{\pi}{4}\right] , &  y\in[-1,0) \\
\end{cases}
$$
\end{lem}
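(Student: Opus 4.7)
The plan is to mirror the structure of the proof of Lemma~\ref{max1}, but with all the comparisons collapsing to a handful of elementary trigonometric inequalities because of the restriction $\zeta \in (0, \pi/4]$. By Theorem~\ref{alphasign}, the kernel is non-negative on $[-1,1]^2$, so $|k(z,y)| = k(z,y)$ and it suffices to show that $\max_{z \in [-1,1]} \sin(\zeta) k(z,y)$ coincides with the stated $\Phi(y)$.

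The first step is to fix $y$ and partition $[-1,1]$ into the at most three non-empty subintervals arising from the four-piece formula (\ref{eqgb2}); if $y \geq 0$ the region $-|z|>y$ is empty, while if $y<0$ the region $|z|<y$ is empty. On each subinterval exactly one of the two cosine factors in (\ref{eqgb2}) depends on $z$, through an affine argument; because $\zeta \leq \pi/4$ forces $\pi/(4\zeta) \geq 1$, the critical points of these cosines (namely $z=-\pi/(4\zeta)$, $z=1-\pi/(4\zeta)$, $z=-1-\pi/(4\zeta)$) all lie outside $[-1,1]$. Hence each variable cosine factor is monotone on its subinterval, and Lemma~\ref{midpoint} immediately identifies the endpoint at which its maximum is attained.

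Next, for $y \in [0,1]$ the three endpoint values all carry a common factor $\cos(\zeta y - \pi/4)$, leaving the candidates $\cos[\zeta(1-y)-\pi/4]$, $\cos[\zeta(y-1)-\pi/4]$ and $\tfrac{\sqrt{2}}{2}$. Using the parity identity $\cos[\zeta(1-y)-\pi/4] = \cos[\zeta(y-1)+\pi/4]$ together with $\zeta(y-1) \in [-\pi/4, 0]$, one has $\zeta(y-1)+\pi/4 \in [0, \pi/4]$ while $\zeta(y-1)-\pi/4 \in [-\pi/2, -\pi/4]$, yielding $\cos[\zeta(y-1)+\pi/4] \geq \tfrac{\sqrt{2}}{2} \geq \cos[\zeta(y-1)-\pi/4]$, and hence the first branch of $\Phi$. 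For $y \in [-1,0)$ the three endpoint candidates are $\cos[\zeta(1+y)-\pi/4]\cos(\zeta y - \pi/4)$, $\cos(\zeta y + \pi/4)\cos[\zeta(1+y)-\pi/4]$ and $\tfrac{\sqrt{2}}{2}\cos(\zeta y - \pi/4)$; comparing the first two reduces to $\cos(\zeta y + \pi/4) \geq \cos(\zeta y - \pi/4)$, which holds because $\zeta y \leq 0$ forces $|\zeta y + \pi/4| \leq |\zeta y - \pi/4|$, while the second dominates the third because both of its cosine factors are $\geq \tfrac{\sqrt{2}}{2}$ and the third is $\leq \tfrac{1}{2}$.

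The only real obstacle is bookkeeping: tracking on each subinterval which endpoint is the maximizer, and then running through the finitely many cross-region comparisons. The trigonometric manipulations themselves are elementary, because the constraint $\zeta \in (0, \pi/4]$ keeps every cosine argument inside $[-\pi/2, \pi/2]$, where cosine is positive and unimodal, so the unimodality-based argument of Lemma~\ref{midpoint} applies without any further care. This is essentially the observation in the paper that, compared with the proof of Lemma~\ref{max1}, the calculations are ``much easier''.
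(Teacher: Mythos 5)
Your argument is correct and takes essentially the same route as the paper: bound $k$ piecewise in $z$ by the appropriate endpoint values (the simplified version of \eqref{firstineq}) and then compare the finitely many candidates, which is precisely what the paper does by observing that the two cases $y\ge 0$ and $y<0$ reduce to the cases $y\in[1-\frac{\pi}{4\zeta},\frac{\pi}{4\zeta})$ and $y\in[-\frac{\pi}{4\zeta},\frac{\pi}{4\zeta}-1)$ already treated in Lemma~\ref{max1}. One small inaccuracy: for $\zeta\in(\frac{\pi}{8},\frac{\pi}{4}]$ the critical point $z=1-\frac{\pi}{4\zeta}$ does lie in $[-1,1]$; what actually saves the monotonicity claim is that it lies outside the relevant subregion $z>|y|\ge 0$, so your conclusion stands.
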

\begin {proof}
This time, a simplified version of inequality \eqref{firstineq} holds,

\begin{equation}\label{firstineq2}
\sin(\zeta) k(z,y)\le\xi(z,y):=
\begin{cases}
\cos[\zeta(1-|y|)-\frac{\pi}{4}]\cos(\zeta y-\frac{\pi}{4}), &  z>|y|, \\
\cos(\zeta y-\frac{\pi}{4})\cos[\zeta(y-1)-\frac{\pi}{4}] , &  |z|<y, \\
\cos(\zeta y+\frac{\pi}{4})\cos[\zeta(1+y)-\frac{\pi}{4}], &  -|z|>y, \\
\frac{\sqrt 2}{2}\cos(\zeta y-\frac{\pi}{4}), &  z<-|y|,
\end{cases}
\end{equation}

so we only need to study two cases. If $y>0$, we are in the same situation as in the case $y\in[1-\frac{\pi}{4\zeta},\frac{\pi}{4\zeta})$ studied in Lemma \ref{max1}. Hence, $\Phi(y)=\cos\left[\zeta(y-1)+\frac{\pi}{4}\right]\cos\(\zeta y-\frac{\pi}{4}\)$. If $y<0$  we are in the same situation as in the case
$y\in[-\frac{\pi}{4\zeta},\frac{\pi}{4\zeta}-1)$. Therefore, \[\Phi(y)=\cos\(\zeta y+\frac{\pi}{4}\)\cos\left[\zeta(y+1)-\frac{\pi}{4}\right].\]
\end{proof}
\begin{lem} Let $\zeta\in(0,\frac{\pi}{4}]$ and $b\ge a\ge0$ such that $a+b=1$. Then $$\sin\(\zeta\)k\(z,y\)\ge c(a)\Phi\(y\)\text{ for }z\in\left[a,b\right],\ y\in\left[-1,1\right],$$ where
$$c(a):=\inf_{y\in[-1,1]}\left\{\frac{\sin(\zeta)\inf\limits_{z\in[a,b]}k(z,y)}{\Phi(y)}\right\}=\frac{[1-\tan(\zeta a)][1-\tan(\zeta b)]}{[1+\tan(\zeta a)][1+\tan(\zeta b)]}.$$
\end{lem}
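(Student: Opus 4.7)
The plan is to mirror the argument of Lemma \ref{lemc}, taking advantage of the fact that for $\zeta\in(0,\pi/4]$ the kernel $k$ is non-negative on $[-1,1]^2$ (by Theorem \ref{alphasign}), so no sign separation is needed and the piecewise formulas for $\xi$ and $\Phi$ collapse to the simpler shapes recorded in the preceding lemma. As before, I would begin from the closed form \eqref{eqgb2} and from the identity $\partial_{tt}k(t,s)+m^2 k(t,s)=0$ (established in \cite{alb-adr}), which implies $\partial_{tt}k\le 0$ on the strip $S_r:=[a,b]\times[-1,1]$ where $k\ge 0$. By concavity in $t$, for each fixed $y$ the infimum of $k(\cdot,y)$ over $[a,b]$ is attained either at the endpoints $z=a$ or $z=b$, or at the break lines of the piecewise definition $z=\pm y$.

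Next I would compute $\Psi(y):=\sin(\zeta)\inf_{z\in[a,b]}k(z,y)$ by invoking Lemma \ref{midpoint}, exactly as in the proof of Lemma \ref{lemc}. Because $\zeta\in(0,\pi/4]$ the bound $1-\frac{\pi}{4\zeta}\le 0$ holds, so the boundary cases $y\in[-1,-\pi/(4\zeta)]$ and $y\in[\pi/(4\zeta),1]$ are empty; this removes cases \eqref{quo1a}, \eqref{quo1e}, \eqref{quo1g}, \eqref{quo1h} from the list for $\Psi/\Phi$, leaving only the analogues of \eqref{quo1b}, \eqref{quo1c}, \eqref{quo1d}, \eqref{quo1f}. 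I expect a four-piece expression
\[
\Psi(y)=
\begin{cases}
\cos\bigl(\zeta b+\tfrac{\pi}{4}\bigr)\cos\bigl[\zeta(y-1)-\tfrac{\pi}{4}\bigr], & y\in[b,1],\\
\cos\bigl(\zeta y+\tfrac{\pi}{4}\bigr)\cos\bigl[\zeta(y-1)-\tfrac{\pi}{4}\bigr], & y\in[a,b),\\
\cos\bigl[\zeta(1-b)-\tfrac{\pi}{4}\bigr]\cos\bigl(\zeta y-\tfrac{\pi}{4}\bigr), & y\in[-b,a),\\
\cos\bigl(\zeta b+\tfrac{\pi}{4}\bigr)\cos\bigl[\zeta(1+y)-\tfrac{\pi}{4}\bigr], & y\in[-1,-b),
\end{cases}
\]
the comparison between pieces being handled, as in Lemma \ref{lemc}, by the identity $\cos(\zeta(1-b)-\tfrac{\pi}{4})\cos(\zeta b-\tfrac{\pi}{4})-\cos(\zeta b+\tfrac{\pi}{4})\cos(\zeta(b-1)-\tfrac{\pi}{4})=\sin\zeta>0$ and by continuity of $k$ on the diagonal $z=-y$.

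Dividing by $\Phi(y)$ and splitting the interval $[-1,1]$ at $0$ (where the formula for $\Phi$ switches), one obtains a four-piece expression for the ratio $\Psi(y)/\Phi(y)$. The two ``one-piece over two-piece'' cases simplify to quotients of the form $\cos(\zeta(1-b)-\tfrac{\pi}{4})/\cos(\zeta(y-1)+\tfrac{\pi}{4})$ and are monotone in $y$, hence dominated by their values at the transition point. The remaining ``two-piece over two-piece'' ratios are exactly the function
\[
g_1(y)=\frac{[1-\tan(\zeta y)]\,[1-\tan(\zeta(1-y))]}{[1+\tan(\zeta y)]\,[1+\tan(\zeta(1-y))]}
\]
appearing in the proof of Lemma \ref{lemc}; it is symmetric about $y=\tfrac12$, attains its maximum there, and hence its minimum on $[a,b]$ at both endpoints, giving the common value
\[
c(a)=\frac{[1-\tan(\zeta a)][1-\tan(\zeta b)]}{[1+\tan(\zeta a)][1+\tan(\zeta b)]}.
\]
A direct check using $\zeta\le\pi/4$ shows that this is no greater than the values taken by the other pieces, so the infimum of $\Psi/\Phi$ over $[-1,1]$ equals $c(a)$.

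The only step that needs real care is the verification that the piecewise construction for $\Psi$ is correct at the boundary lines $z=\pm y$, because concavity in $t$ only locates the minimum among endpoints and break lines; this is exactly where the previous proof used continuity of $k$ on the antidiagonal and Lemma \ref{midpoint}, and the same arguments port over verbatim to the present range of $\zeta$. Once this bookkeeping is finished, combining the estimates proves the claimed inequality and identifies $c(a)$.
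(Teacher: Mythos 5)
Your strategy is exactly the paper's: the published proof simply notes that $\Psi$ is still given by \eqref{eta2}, writes down the simplified piecewise quotient $\Psi/\Phi$, and appeals to ``the same kind of arguments'' as in Lemma \ref{lemc}, so in that sense you have reproduced the intended argument. The one concrete slip is in the case count. For $\zeta\le\pi/4$ the weight $\Phi$ has two pieces (split at $y=0$) while $\Psi$ keeps the four pieces of \eqref{eta2} (split at $-b$, $a$, $b$), so the quotient $\Psi/\Phi$ has \emph{five} pieces, not four: the interval $y\in[-1,-b)$ is nonempty whenever $b<1$ and contributes the ratio $\cos(\zeta b+\frac{\pi}{4})/\cos(\zeta y+\frac{\pi}{4})$, i.e.\ the analogue of \eqref{quo1g} (it is only \eqref{quo1h} that genuinely disappears, because $-\pi/(4\zeta)\le-1$). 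You discard both on the grounds that $[-1,-\pi/(4\zeta))$ is empty; that kills \eqref{quo1h} but not the \eqref{quo1g}-type piece, whose right endpoint simply becomes $-b$, and this piece must still be shown to dominate $c(a)$. It does: on $[-1,-b)$ one has $\zeta y+\frac{\pi}{4}\in[\frac{\pi}{4}-\zeta,\frac{\pi}{4}-\zeta b)$, so the ratio is at least $\cos(\zeta b+\frac{\pi}{4})/\cos(\frac{\pi}{4}-\zeta)$, and a short product-to-sum computation shows this exceeds $\tan(\frac{\pi}{4}-\zeta a)\tan(\frac{\pi}{4}-\zeta b)=c(a)$. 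Note that the cruder bound $\cos(\zeta b+\frac{\pi}{4})\ge c(a)$, which is how the analogous pieces are dispatched at the end of Lemma \ref{lemc}, actually \emph{fails} for small $\zeta$ (e.g.\ $\zeta=0.1$, $a=b=\frac12$), so this fifth piece cannot be waved away by citing the earlier lemma verbatim. With that case restored, your argument coincides with the paper's.
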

\begin{proof}
$\Psi$ is as in \ref{eta2}, but we get the simpler expression
\begin{subnumcases}{\label{quo12}\frac{\Psi\(y\)}{\Phi(y)}=}
\label{quo1b2}\frac{\cos\(\zeta b+\frac{\pi}{4}\)\cos\(\zeta(y-1)-\frac{\pi}{4}\)}{\cos\(\zeta y-\frac{\pi}{4}\)\cos\(\zeta(y-1)+\frac{\pi}{4}\)}, & $ y\in\left[b,1\right],$\\
\label{quo1c2}\frac{\cos\(\zeta y+\frac{\pi}{4}\)\cos\(\zeta(y-1)-\frac{\pi}{4}\)}{\cos\(\zeta y-\frac{\pi}{4}\)\cos\(\zeta(y-1)+\frac{\pi}{4}\)}, & $ y\in\left[a,b\),$\\
\label{quo1d2}\frac{\cos\(\zeta(1-b)-\frac{\pi}{4}\)}{\cos\(\zeta(y-1)+\frac{\pi}{4}\)}, & $ y\in\left[0,a\),$\\
\label{quo1f2}\frac{\cos\(\zeta(1-b)-\frac{\pi}{4}\)\cos\(\zeta y-\frac{\pi}{4}\)}{\cos\(\zeta y+\frac{\pi}{4}\)\cos\(\zeta(1+y)-\frac{\pi}{4}\)}, & $ y\in\left[-b,0\),$\\
\label{quo1h2}\cos\(\zeta b+\frac{\pi}{4}\), & $ y\in\left[-1,-b\).$
\end{subnumcases}\par
By the same kind of arguments used in the proof of Lemma \ref{lemc}, we get the desired result.

\end{proof}

\begin{lem}\label{intabs}
$$\sup_{t\in \left[-T,T\right]}\int_{-T}^{T}|k\(t,s\)|ds=\begin{cases} \frac{1}{\omega} &\text{if } \zeta\in\(0,\frac{\pi}{4}\right], \\
\frac{1}{\omega}\left[1+\frac{\sqrt2\cos\frac{2\zeta+\pi}{3}\sin\frac{\pi-4\zeta}{12}+\cos\frac{\pi-\zeta}{3}\(1-\sin\frac{2\zeta+\pi}{3}\)}{\sin\zeta}\right], &  \zeta\in\left[\frac{\pi}{4},\frac{\pi}{2}\right].\end{cases}$$
\end{lem}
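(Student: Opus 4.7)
The plan is to exploit the identity $\int_{-T}^T k(t,s)\,ds=1/\omega$, valid for every $t\in[-T,T]$, which follows from the observation that $u\equiv 1/\omega$ is the unique solution of the linear problem $u'(t)+\omega u(-t)=1$, $u(-T)=u(T)$ (check: $0+\omega(1/\omega)=1$ and the periodic condition is trivial); combining this with the Green's function representation \eqref{gensol} yields $1/\omega=\int_{-T}^T k(t,s)\,ds$ independently of $t$. In the first regime $\zeta\in(0,\pi/4]$, Theorem~\ref{alphasign} gives $k\ge 0$, hence $|k(t,s)|=k(t,s)$ and the supremum equals $1/\omega$ at once.

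For $\zeta\in[\pi/4,\pi/2]$ the kernel changes sign but the same identity still holds, so setting $E_-(t):=\{s\in[-T,T]:k(t,s)<0\}$ one obtains
\[
\int_{-T}^T|k(t,s)|\,ds=\frac{1}{\omega}+2\int_{E_-(t)}|k(t,s)|\,ds=:\frac{1}{\omega}+2J(t).
\]
The task reduces to maximising $J$. The first substantive step is to identify $E_-(t)$ explicitly. In each of the four pieces of the factorised expression \eqref{eqgb2}, $\sin(\zeta)k(z,y)$ is a product of a cosine of $z=t/T$ and a cosine of $y=s/T$, so its sign is read off from the zeros of these factors. With $\alpha:=\pi/(4\zeta)\in[1/2,1]$, the relevant zero loci are the lines $z=\pm\alpha$, $z=\pm(1-\alpha)$, $y=\pm\alpha$, $y=\pm(1-\alpha)$, and these partition $[-1,1]^2$ into finitely many rectangles of constant sign. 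A sub-case analysis in $z$ then writes $E_-(t)$ as a finite disjoint union of intervals with endpoints explicit in $z$ and $\alpha$.

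Next I would compute $J$ by integrating $|k(t,\cdot)|$ over $E_-(t)$ using the elementary antiderivative $\int\cos(\zeta y+c)\,dy=\zeta^{-1}\sin(\zeta y+c)$, yielding $J$ as a piecewise-explicit combination of sines and cosines linear in $z$. Finally I would maximise $J$ branch by branch: within each sub-interval of $z$, a trigonometric critical equation produces a candidate optimum, the candidates are compared, and the genuine maximiser $z^\ast$ emerges. The arguments $\frac{2\zeta+\pi}{3}$, $\frac{\pi-\zeta}{3}$, and $\frac{\pi-4\zeta}{12}=\frac{\pi-\zeta}{3}-\frac{\pi}{4}$ appearing in the statement are the signatures of the relation $\zeta(1-z^\ast)=\frac{\pi-\zeta}{3}$ satisfied by the optimum, from which $\zeta z^\ast+\frac{\pi}{4}=\frac{2\zeta+\pi}{3}$ also follows.

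The expected main obstacle is the bookkeeping of sub-cases: tracking how $E_-(t)$ deforms across the breakpoints $\pm\alpha,\pm(1-\alpha)$, and ruling out the non-optimal branches to isolate the true maximiser. Once $z^\ast$ is in hand, the simplification of $J(z^\ast)$ into the compact form in the statement is mechanical, relying on product-to-sum identities and $\cos a+\sin a=\sqrt{2}\cos(a-\pi/4)$, the same identity used in passing from \eqref{gbarra} to \eqref{eqgb2}. As a consistency check, at $\zeta=\pi/4$ the bracketed correction vanishes (since $\cos\frac{2\zeta+\pi}{3}=0$ and $1-\sin\frac{2\zeta+\pi}{3}=0$), matching the value $1/\omega$ of the first case.
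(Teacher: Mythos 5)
Your proposal follows essentially the same route as the paper: the identity $\int_{-T}^{T}k(t,s)\,ds=1/\omega$ obtained from the constant solution of $x'(t)+\omega x(-t)=1$ with periodic conditions, the reduction $\int_{-T}^{T}|k(t,s)|\,ds=\frac{1}{\omega}+2\int k^{-}(t,s)\,ds$, explicit identification of the negativity set of $k(t,\cdot)$ from the factorised form \eqref{eqgb2} (the paper additionally exploits the symmetries $k(t+T,s+T)=k(t,s)$ and $k(t+T,s)=k(t,s+T)$ to restrict to $t\in[-T,0]$ and halve the case analysis), and a branch-by-branch maximisation of the resulting piecewise trigonometric function via its first and second derivatives. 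One small correction to your heuristic aside: the maximiser is $z^{\ast}=\frac{1}{3}\bigl(\frac{\pi}{4\zeta}-1\bigr)$, so the constants arise from $\zeta z^{\ast}=\frac{\pi-4\zeta}{12}$, $\zeta z^{\ast}+\frac{\pi}{4}=\frac{\pi-\zeta}{3}$ and $\zeta(1+z^{\ast})+\frac{\pi}{4}=\frac{2\zeta+\pi}{3}$, rather than from $\zeta(1-z^{\ast})=\frac{\pi-\zeta}{3}$.
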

\begin{proof}
First of all, if $\zeta\in\left[0,\frac{\pi}{4}\right]$, then $|k\(t,s\)|=k\(t,s\)$. The solution of the problem $x'\(t\)+\omega\,x\(-t\)=1$, $x\(-T\)=x\(T\)$ is clearly $u\(t\)\equiv\frac{1}{\omega}$, but at the same time it has to be of the kind in equation \eqref{gensol}, so $u\(t\)=\int_{-T}^{T}k\(t,s\)ds$. This proves the first part.\par
If $\zeta\in\left[\frac{\pi}{4},\frac{\pi}{2}\right]$, then $$\int_{-T}^{T}|k\(t,s\)|ds=\int_{-T}^{T}k^+\(t,s\)ds+\int_{-T}^{T}k^-\(t,s\)ds=\frac{1}{\omega}+2\int_{-T}^{T}k^-\(t,s\)ds.$$ We make two observations here.\par
From equation \eqref{gbarra}, it is easy to check that $k\(t+T,s+T\)=k\(t,s\)$ and \[k\(t+T,s\)=k\(t,s+T\),\] for a.e. $t,s\in\left[-T,0\right]$. Hence, for $t\in\left[-T,0\right]$ and a function $\xi:\bR\to\bR$, using the change of variables $r=s+T$, $\tau=s-T$, we have that
\begin{align*} \int_{-T}^{T}\xi\(k\(t+T,s\)\)ds & =\int_{-T}^{0}\xi\(k\(t+T,s\)\)ds+\int_{0}^{T}\xi\(k\(t+T,s\)\)ds \\ &=\int_{-T}^{0}\xi\(k\(t,s+T\)\)ds +\int_{-T}^{0}\xi\(k\(t+T,\tau+T\)\)d\tau\\ & =\int_{0}^{T}\xi\(k\(t,r\)\)dr+\int_{-T}^{0}\xi\(k\(t,\tau\)\)d\tau=\int_{-T}^{T}\xi\(k\(t,s\)\)ds.\end{align*}
Therefore, $\sup_{t\in \left[-T,T\right]}\int_{-T}^{T}|k\(t,s\)|ds=\sup_{t\in \left[-T,0\right]}\int_{-T}^{T}|k\(t,s\)|ds$.
The second observation is that, taking into account Lemma \ref{posstrip}, $k\(t,s\)$ is positive in $\(-\frac{\pi}{4\omega },1-\frac{\pi}{4\omega }\)\times\left[-1,1\right]$, so $$\sup_{t\in \left[-T,0\right]}\int_{-T}^{T}|k\(t,s\)|ds=\sup_{t\in \left[-T,0\right]\backslash\(-\frac{\pi}{4\omega },1-\frac{\pi}{4\omega }\)}\int_{-T}^{T}|k\(t,s\)|ds.$$ Using the same kind of arguments as in Lemma \ref{posstrip}, it is easy to check that $k\(t,s\)$ is negative in $\(-T,-\frac{\pi}{4\omega }\)\times\(t,-\frac{\pi}{4\omega }\)$ if $t\in\(-T,-\frac{\pi}{4\omega }\)$ and $\(\frac{\pi}{4\omega }-1,0\)\times\(t,1-\frac{\pi}{4\omega }\)$ if $t\in\(\frac{\pi}{4\omega }-1,0\)$, so it is enough to compute $\eta\(t\):=\int_{B\(t\)}k^-\(t,s\)ds$ where $B\(t\)=\{s\in\left[-T,T\right]\ :\ \(t,s\)\in\supp\(k^-\)\}$.
$$2\omega\sin\(\zeta\)\eta\(t\)=\begin{cases} \cos\(\omega t+\zeta+\frac{\pi}{4}\)\left[1+\sin\(\omega t-\frac{\pi}{4}\)\right], & t\in\(-T,-\frac{\pi}{4\omega }\), \\ \sqrt 2\cos\(\omega t+\zeta+\frac{\pi}{4}\)\sin\omega t+\cos\(\omega t+\frac{\pi}{4}\)\left[1-\sin\(\omega t+\zeta+\frac{\pi}{4}\)\right], & t\in\(\frac{\pi}{4\omega }-1,0\).\end{cases}$$
With the change of variable $t=zT$,
$$2\omega\sin\(\zeta\)\eta\(z\)=\begin{cases} \eta_1\(z\) &\text{if }z\in\(-1,-\frac{\pi}{4\zeta }\), \\ \eta_2\(z\), & z\in\(\frac{\pi}{4\zeta }-1,0\).\end{cases}$$
where
$$\eta_1\(z\)=\cos\left[\zeta\(z+1\)+\frac{\pi}{4}\right]\left[1+\sin\(\zeta z-\frac{\pi}{4}\)\right]$$ and
$$\eta_2\(z\)=\sqrt 2\cos\left[\zeta\(z+1\)+\frac{\pi}{4}\right]\sin\zeta z+\cos\(\zeta z+\frac{\pi}{4}\)\left[1-\sin\(\zeta\(z+1\)+\frac{\pi}{4}\)\right].$$
It is easy to check that
$$\eta_1'\(-1\)\le0,\ \eta_1'\(-\frac{\pi}{4\zeta }\)=0,\ \eta_1''\(z\)\ge0\text{ for }z\in\left[-1,-\frac{\pi}{4\zeta }\right],$$
$$\eta_1'\(-1\)=\eta_2\(0\),$$
$$\eta_2'\(\frac{\pi}{4\omega }-1\)>0,\ \eta_2'\(0\)<0,\ \eta_2''\(z\)\ge0\text{ for }z\in\left[\frac{\pi}{4\zeta }-1,0\right].$$
With these facts we conclude that there is a unique maximum of the function $\eta\(z\)$ in the interval $\(\frac{\pi}{4\zeta }-1,0\)$, precisely where $\eta_2'\(z\)=\zeta \(\cos\left[\zeta\( 1+ 2 z \)\right] - \sin\(\frac{\pi}{4} + z \zeta\)\)=0$, this is, for $z=\frac{1}{3}(\frac{\pi}{4}-1)$, and therefore the statement of the theorem holds.
\end{proof}
\begin{lem}\label{leminf}Let $\omega\in\left[\frac{\pi}{4}T,\frac{\pi}{2}T\right]$ and $T-\frac{\pi}{4\omega}<a< b=T-a<\frac{\pi}{4\omega}$. Then
$$2\omega\sin(\zeta)\inf_{t\in[a,b]}\int_{a}^{b}k(t,s)\,ds=\sin\omega(T-2a)+\cos\zeta-\cos2\omega a.$$
\end{lem}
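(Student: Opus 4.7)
The strategy is to compute $\Xi(t):=2\omega\sin(\zeta)\int_a^b k(t,s)\,ds$ explicitly as a function of $t\in[a,b]$, evaluate at $t=a$ to recover the claimed expression, and then show that $\Xi$ is strictly increasing on $[a,b]$ so that $\inf_{[a,b]}\Xi=\Xi(a)$.

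First, the hypothesis $T-\tfrac{\pi}{4\omega}<a<b=T-a<\tfrac{\pi}{4\omega}$ together with $\zeta\in[\tfrac{\pi}{4},\tfrac{\pi}{2}]$ forces $a>0$, so that $|t|=t$ and $|s|=s$ for every $(t,s)\in[a,b]\times[a,b]$. Consequently the piecewise formula \eqref{gbarra} reduces on this square to its first branch whenever $s<t$ and to its second branch whenever $s>t$. Splitting the integral at $s=t$ and integrating the four elementary trigonometric terms, the $\sin\omega(T-2t)$ contributions arising from the two pieces cancel, and using $a+b=T$ (so $T-a=b$ and $T-b=a$) the remainder collapses to
\[\Xi(t)=\sin\omega(b-t)-\sin\omega(a-t)+\cos\omega(T+a-t)-\cos\omega(a+t).\]
Substituting $t=a$ immediately gives $\Xi(a)=\sin\omega(T-2a)+\cos\zeta-\cos 2\omega a$, matching the desired right-hand side.

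Next I would show $\Xi'(t)>0$ on $[a,b]$. Differentiating and applying the prosthaphaeresis identities yields
\[\Xi'(t)=2\omega\bigl[\sin\theta_0\sin\theta+\sin\psi_0\cos\theta\bigr],\qquad \theta:=\tfrac{\omega(T-2t)}{2},\ \theta_0:=\tfrac{\omega(b-a)}{2},\ \psi_0:=\tfrac{\omega(T+2a)}{2}.\]
For $t\in[a,b]$ one has $\theta\in[-\theta_0,\theta_0]$, so $|\sin\theta|\le\sin\theta_0$ and $\cos\theta\ge\cos\theta_0>0$; hence $\Xi'(t)\ge 2\omega(\sin\psi_0\cos\theta_0-\sin^2\theta_0)$.

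The remaining step, and the only delicate one, is to verify that this lower bound is strictly positive. A direct check from $\zeta\in[\tfrac{\pi}{4},\tfrac{\pi}{2}]$ and $T-\tfrac{\pi}{4\omega}<a<\tfrac{T}{2}$ yields $0<\theta_0<\tfrac{\pi}{8}<\psi_0<\tfrac{\pi}{2}$. It follows that $\sin\theta_0<\sin\psi_0$ and $\tan\theta_0<1$; the latter rearranges to $\sin^2\theta_0<\sin\theta_0\cos\theta_0$, which combined with $\sin\theta_0<\sin\psi_0$ gives $\sin^2\theta_0<\sin\psi_0\cos\theta_0$. Therefore $\Xi'>0$ throughout $[a,b]$, so $\Xi$ is strictly increasing, $\inf_{t\in[a,b]}\Xi(t)=\Xi(a)$, and the lemma is proved.
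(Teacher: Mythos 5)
Your proposal is correct, and it fills in a step that the paper itself only sketches. The first half of your argument coincides with the paper's: both derive the closed form $2\omega\sin(\zeta)\int_a^b k(t,s)\,ds=\sin\omega(T-a-t)-\sin\omega(a-t)+\cos\omega(T+a-t)-\cos\omega(a+t)$ for $t\in[a,b]$ — the paper by computing the antiderivative $\int_{-T}^{s}k(t,r)\,dr$ over all four regions of \eqref{gbarra} and subtracting, you more economically by observing that the hypotheses force $a>0$, so that only the first two branches of the kernel occur on $[a,b]^2$ and the integral splits at $s=t$; your evaluation at $t=a$ reproduces the stated right-hand side. Where you genuinely diverge is in locating the infimum. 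The paper dispatches this with the single sentence that one uses ``similar arguments to the ones used in the proof of Lemma~\ref{lemc}'', i.e.\ it implicitly leans on the concavity identity $\partial^2 k/\partial t^2+\omega^2k=0$ on the positivity strip together with the midpoint comparison of Lemma~\ref{midpoint} to push the minimum to an endpoint and then select $t=a$ over $t=b$. You instead prove outright that $\Xi$ is strictly increasing on $[a,b]$: the sum-to-product reduction $\Xi'(t)=2\omega[\sin\theta_0\sin\theta+\sin\psi_0\cos\theta]$ is correct, the range checks $0<\theta_0<\pi/8<\psi_0<\pi/2$ do follow from $a>T-\frac{\pi}{4\omega}$ and $a<T/2$ (one gets $\theta_0<\frac{\pi}{4}-\frac{\zeta}{2}$ and $\psi_0>\frac{3\zeta}{2}-\frac{\pi}{4}$), and the chain $\sin^2\theta_0<\sin\theta_0\cos\theta_0<\sin\psi_0\cos\theta_0$ closes the estimate. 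Your route yields slightly more (strict monotonicity rather than mere endpoint minimality, which is consistent with the endpoint comparison $\Xi(b)-\Xi(a)=2(\cos 2\omega a-\cos\zeta)>0$) and, more importantly, is self-contained where the paper's proof is only a reference.
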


\begin{proof}
It is easy to check that
$$2\omega\sin(\zeta)\int_{-T}^{s}k(t,r)\,dr=
\begin{cases}
\sin\omega(T+s+t)-\cos\omega(T+s-t)-\sin\omega t+\cos\omega t, &  |t|\le-s, \\
\sin\omega(T+s+t)-\cos\omega(T-s+t)-\sin\omega t+\cos\omega t, &  |s|\le-t, \\
-\sin\omega(T-s-t)-\cos\omega(T+s-t)-\sin\omega t+\cos\omega t +2\sin\omega t, &  |s|\le t, \\
-\sin\omega(T-s-t)-\cos\omega(T-s+t)-\sin\omega t+\cos\omega t +2\sin\omega t, &  |t|\le s. \\
\end{cases}
$$
Therefore $\int_{a}^{b}k(t,s)\,ds=\int_{-T}^{b}k(t,s)\,ds-\int_{-T}^{a}k(t,s)\,ds$, this is,
$$2\omega\sin(\zeta)\int_{a}^{b}k(t,s)\,ds=\sin\omega(T-a-t)-\sin\omega(a-t)+\cos\omega(T+a-t)-\cos\omega(a+t),  \text{ if } t\in[a,b].$$
Using similar arguments to the ones used in the proof of Lemma \ref{lemc} we can show that
$$2\omega\sin(\zeta)\inf_{t\in[a,b]}\int_{a}^{b}k(t,s)\,ds=\sin\omega(T-2a)+\cos\zeta-\cos2\omega a.$$
\end{proof}
With the same method, we can prove the following corollary.
\begin{cor}Let $\omega\in\(0,\frac{\pi}{4}T\right]$ and $0<a< b=T-a<1$. Then
$$2\omega\sin(\zeta)\inf_{t\in[a,b]}\int_{a}^{b}k(t,s)\,ds=\sin\omega(T-2a)+\cos\zeta-\cos2\omega a.$$
\end{cor}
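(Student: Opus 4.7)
The plan is to mirror the proof of Lemma \ref{leminf} with the simplification that, by Theorem \ref{alphasign}\,(1)--(3), the kernel $k$ is already non-negative on the whole square $[-T,T]^{2}$ when $\zeta\in(0,\pi/4]$. Consequently no sign bookkeeping is needed: the infimum we must compute is the infimum of a non-negative quantity, and no ``positive strip'' restriction on $a$ is required beyond $0<a<b=T-a<T$.

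First I would reuse the antiderivative computation from Lemma \ref{leminf}, namely the piecewise formula for $2\omega\sin(\zeta)\int_{-T}^{s}k(t,r)\,dr$ obtained by integrating each of the four branches in \eqref{gbarra}. Under our standing hypothesis $t\in[a,b]$ we have $t>0$, hence when $s\in[a,b]$ the cases $|t|\le -s$ and $|s|\le -t$ do not contribute, and only the branches $|s|\le t$ and $|t|\le s$ survive. Writing $\int_{a}^{b}k(t,s)\,ds=\int_{-T}^{b}k(t,s)\,ds-\int_{-T}^{a}k(t,s)\,ds$ and substituting these two branches produces exactly
\[
2\omega\sin(\zeta)\int_{a}^{b}k(t,s)\,ds=\sin\omega(T-a-t)-\sin\omega(a-t)+\cos\omega(T+a-t)-\cos\omega(a+t),
\]
for $t\in[a,b]$, identical to the expression that appears in Lemma \ref{leminf}.

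Next I would minimize this expression in $t$ over $[a,b]$. As noted in the proof of Lemma \ref{lemc}, differentiating the relation $\partial_{t}k(t,s)+\omega\,k(-t,s)=0$ gives $\partial_{t}^{2}k(t,s)+\omega^{2}k(t,s)=0$, which is inherited by the integral with respect to $s$. Since $k\ge 0$ here, the function $t\mapsto \int_{a}^{b}k(t,s)\,ds$ is concave on $[a,b]$, so its infimum is attained at one of the endpoints $t=a$ or $t=b$. The symmetry $b=T-a$ combined with the invariance $k(t+T,s+T)=k(t,s)$ (used already in Lemma \ref{intabs}) yields equal values at the two endpoints, and a direct substitution $t=a$ in the displayed formula above simplifies by elementary trigonometry to
\[
\sin\omega(T-2a)+\cos\zeta-\cos 2\omega a,
\]
which is the claim.

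The main obstacle, as in Lemma \ref{leminf}, is not the integration itself but justifying that the infimum lies on the boundary of the differentiability region; here the argument is strictly easier than in Lemma \ref{leminf} because $k\ge 0$ everywhere, so one does not need the additional strip-positivity input of Lemma \ref{posstrip} and may invoke the ODE argument globally on $[a,b]$.
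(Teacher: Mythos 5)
Your overall strategy is the right one and matches the paper's (which offers no separate proof, only the remark that the corollary follows ``with the same method'' as Lemma \ref{leminf}): reuse the piecewise antiderivative, observe that for $t\in[a,b]$ only the branches $|s|\le t$ and $|t|\le s$ are needed at the evaluation points $s=a$ and $s=b$, obtain the closed form $G(t):=2\omega\sin(\zeta)\int_a^b k(t,s)\,ds=\sin\omega(T-a-t)-\sin\omega(a-t)+\cos\omega(T+a-t)-\cos\omega(a+t)$, and then use $G''=-\omega^2 G$ together with $G\ge 0$ (valid here since $k\ge 0$ on all of $[-T,T]^2$ for $\zeta\in(0,\pi/4]$) to conclude that $G$ is concave on $[a,b]$ and hence attains its infimum at an endpoint. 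All of that is sound.

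However, your final step contains a genuine error: the two endpoint values are \emph{not} equal, and no symmetry of the kernel forces them to be. Direct substitution gives
\begin{equation*}
G(a)=\sin\omega(T-2a)+\cos\zeta-\cos 2\omega a,\qquad G(b)=G(T-a)=\sin\omega(T-2a)+\cos 2\omega a-\cos\zeta,
\end{equation*}
so $G(a)-G(b)=2\left(\cos\zeta-\cos 2\omega a\right)$, which vanishes only in the degenerate case $a=T/2$. The relations $k(t+T,s+T)=k(t,s)$ and $k(t+T,s)=k(t,s+T)$ are translation identities, not a reflection symmetry about $t=T/2$, so they cannot be used as you propose. The proof is easily completed: since $0<2\omega a<\omega T=\zeta\le\pi/4$ and the cosine is decreasing on $[0,\pi]$, one has $\cos 2\omega a>\cos\zeta$, hence $G(a)<G(b)$, and the infimum is $G(a)=\sin\omega(T-2a)+\cos\zeta-\cos 2\omega a$, as claimed. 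With that comparison inserted in place of the false symmetry assertion, your argument is correct and coincides with the paper's intended one.
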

\begin{rem}If $\omega\in\(0, \frac{\pi}{4}T\right]$, then
$$\inf_{t\in[-T,T]}\int_{-T}^{T}k(t,s)\,ds=\frac{1}{\omega},$$
just because of the observation in the proof of Lemma \ref{intabs}.
\end{rem}

Now we can state conditions $(I^0_\rho)$ and $(I^1_\rho)$ for the special case of problem \eqref{eqgenpro2}-\eqref{prdic}:

\begin{enumerate}
\item[$(\mathrm{I}_{\protect\rho,\omega}^{1})$] \label{EqB2} Let
$$
  f^{{-\rho},{\rho}}_\omega:=\sup \left\{\frac{h(t,u,v)+\omega v}{\rho }:\;(t,u,v)\in
[ -T,T]\times [ -\rho,\rho ]\times [-\rho,\rho ]\right\}.$$
\hspace{2em}There exist $\rho> 0$ and $\omega\in\(0,\frac{\pi}{4}\right]$ such that $f^{-\rho,\rho}_\omega <\omega$,\\
OR \\
\hspace*{2em} there exist $\rho> 0$ and $\omega\in\(\frac{\pi}{4},\frac{\pi}{2}\right]$ such that

$$
 f^{-\rho,\rho}_\omega\cdot\left[1+\frac{\sqrt2\cos\frac{2\zeta+\pi}{3}\sin\frac{\pi-4\zeta}{12}+\cos\frac{\pi-\zeta}{3}\(1-\sin\frac{2\zeta+\pi}{3}\)}{\sin\zeta}\right]
 <\omega.
$$

\item[$(\mathrm{I}_{\protect\rho,\omega}^{0})$] there exist $\rho >0$ such that
such that
$$
f_{(\rho ,{\rho /c})}^\omega\cdot\inf_{t\in [a,b]}\int_{a}^{b}k(t,s)\,ds>1,
$$
where
$$
f_{(\rho ,{\rho /c})}^\omega =\inf \left\{\frac{h(t,u,v)+\omega v}{\rho }%
:\;(t,u,v)\in [a,b]\times [\rho ,\rho /c]\times
[-\rho /c,\rho /c]\right\}.$$
\end{enumerate}

\begin{thm}

\label{thmmsol2} Let $\omega\in\(0,\frac{\pi}{2}T\right]$. Let $[a,b]\subset[-T,T]$ such that $a=1-b\in(\max\{0,T-\frac{\pi}{4\omega}\},\frac{T}{2})$. Let
$$c=\frac{[1-\tan(\omega a)][1-\tan(\omega b)]}{[1+\tan(\omega a)][1+\tan(\omega b)]}.$$

Problem \eqref{eqgenpro2}-\eqref{prdic} has at least one non-zero solution
in $K$ if either of the following conditions hold.

\begin{enumerate}

\item[$(S_{1})$] There exist $\rho _{1},\rho _{2}\in (0,\infty )$ with $\rho
_{1}/c<\rho _{2}$ such that $(\mathrm{I}_{\rho _{1},\omega}^{0})$ and $(\mathrm{I}_{\rho _{2},\omega}^{1})$ hold.

\item[$(S_{2})$] There exist $\rho _{1},\rho _{2}\in (0,\infty )$ with $\rho
_{1}<\rho _{2}$ such that $(\mathrm{I}_{\rho _{1},\omega}^{1})$ and $(\mathrm{I}%
_{\rho _{2},\omega}^{0})$ hold.
\end{enumerate}
The integral equation \eqref{eqhamm} has at least two non-zero solutions in $K$ if one of
the following conditions hold.

\begin{enumerate}

\item[$(S_{3})$] There exist $\rho _{1},\rho _{2},\rho _{3}\in (0,\infty )$
with $\rho _{1}/c<\rho _{2}<\rho _{3}$ such that $(\mathrm{I}_{\rho
_{1},\omega}^{0}),$ $(
\mathrm{I}_{\rho _{2},\omega}^{1})$ $\text{and}\;\;(\mathrm{I}_{\rho _{3},\omega}^{0})$
hold.

\item[$(S_{4})$] There exist $\rho _{1},\rho _{2},\rho _{3}\in (0,\infty )$
with $\rho _{1}<\rho _{2}$ and $\rho _{2}/c<\rho _{3}$ such that $(\mathrm{I}%
_{\rho _{1},\omega}^{1}),\;\;(\mathrm{I}_{\rho _{2},\omega}^{0})$ $\text{and}\;\;(\mathrm{I%
}_{\rho _{3},\omega}^{1})$ hold.
\end{enumerate}
The integral equation \eqref{eqhamm} has at least three non-zero solutions in $K$ if one
of the following conditions hold.

\begin{enumerate}

\item[$(S_{5})$] There exist $\rho _{1},\rho _{2},\rho _{3},\rho _{4}\in
(0,\infty )$ with $\rho _{1}/c<\rho _{2}<\rho _{3}$ and $\rho _{3}/c<\rho
_{4}$ such that $(\mathrm{I}_{\rho _{1},\omega}^{0}),$ $(\mathrm{I}_{\rho _{2},\omega}^{1}),\;\;(\mathrm{I}%
_{\rho _{3},\omega}^{0})\;\;\text{and}\;\;(\mathrm{I}_{\rho _{4},\omega}^{1})$ hold.

\item[$(S_{6})$] There exist $\rho _{1},\rho _{2},\rho _{3},\rho _{4}\in
(0,\infty )$ with $\rho _{1}<\rho _{2}$ and $\rho _{2}/c<\rho _{3}<\rho _{4}$
such that $(\mathrm{I}_{\rho _{1},\omega}^{1})$, $(\mathrm{I}_{\rho
_{2},\omega}^{0})$, $(\mathrm{I}_{\rho _{3},\omega}^{1})$ and $(\mathrm{I}%
_{\rho _{4},\omega}^{0})$ hold.
\end{enumerate}
\end{thm}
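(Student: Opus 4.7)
The plan is to apply Theorem \ref{thmmsol1} to the Hammerstein integral equation \eqref{gensol} obtained from the periodic problem by the shift $u'(t)+\omega u(-t)=h(t,u(t),u(-t))+\omega u(-t)$, setting $f(t,u,v):=h(t,u,v)+\omega v$ and $g\equiv 1$. Since equivalence between \eqref{eqgenpro2}--\eqref{prdic} and the integral equation for $\omega\neq 0$ is established in \cite{alb-adr} and recalled in the excerpt, it suffices to check that the kernel \eqref{gbarra}, the weight $g\equiv 1$, and the shifted nonlinearity $f$ satisfy the hypotheses $(C_1)$--$(C_4)$ with the interval $[a,b]$ and constant $c$ prescribed in the statement, and then to show that the numerical conditions $(\mathrm{I}_{\rho,\omega}^{1})$ and $(\mathrm{I}_{\rho,\omega}^{0})$ are precisely the conditions $(\mathrm{I}_{\rho}^{1})$ and $(\mathrm{I}_{\rho}^{0})$ of Section~2 for this specific kernel.

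First I would verify $(C_1)$: continuity of $t\mapsto k(t,s)$ is visible from the piecewise definition \eqref{gbarra}, since each branch is a trigonometric polynomial and the four pieces agree on the boundaries $t=\pm|s|$ (this continuity, in fact everywhere off the diagonal, is used in \cite{alb-adr} and recalled in the proof of Lemma \ref{lemc}). Condition $(C_2)$ is exactly what is supplied by Lemmas \ref{max1} and \ref{lemc}: the first gives the dominating function $\Phi$ with $|k(t,s)|\leq\Phi(s)/\sin\zeta$, and the second, applied to $[a,b]=[a,1-a]$ (with the hypothesis $a\in(\max\{0,T-\tfrac{\pi}{4\omega}\},\tfrac{T}{2})$ guaranteeing $[a,b]$ lies in the positivity strip $S$ of Lemma \ref{posstrip}), produces the lower bound $k(t,s)\geq c\,\Phi(s)/\sin\zeta$ for $t\in[a,b]$, with precisely the constant $c$ appearing in the statement. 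Condition $(C_3)$ is trivial for $g\equiv 1$ since $\Phi\in L^1$ and $\int_a^b\Phi>0$ by the strict positivity on $S$. For $(C_4)$, one has $f(t,u,v)=h(t,u,v)+\omega v\geq 0$ provided one uses the cone $K$ of \eqref{eqcone-cs} and the standard truncation trick in the Remark after Lemma \ref{idx0-sp} to extend $h$ to all of $\bR^2$ preserving sign; Carath\'eodory regularity and the local $L^\infty$ bound follow from the corresponding properties of $h$.

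Next I would translate the numerical conditions. By Lemma \ref{intabs} one has
\[
\frac{1}{m}=\sup_{t\in[-T,T]}\int_{-T}^{T}|k(t,s)|\,ds
=\begin{cases}\dfrac{1}{\omega},&\zeta\in(0,\tfrac{\pi}{4}],\\[4pt]
\dfrac{1}{\omega}\Bigl[1+\dfrac{\sqrt{2}\cos\frac{2\zeta+\pi}{3}\sin\frac{\pi-4\zeta}{12}+\cos\frac{\pi-\zeta}{3}(1-\sin\frac{2\zeta+\pi}{3})}{\sin\zeta}\Bigr],&\zeta\in(\tfrac{\pi}{4},\tfrac{\pi}{2}],\end{cases}
\]
so the inequality $f^{-\rho,\rho}_\omega/m<1$ is exactly the explicit bound listed in $(\mathrm{I}_{\rho,\omega}^{1})$, and by the stronger-but-easier remark after Lemma \ref{ind1b} this implies $(\mathrm{I}_{\rho}^{1})$ for our kernel. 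Similarly, Lemma \ref{leminf} (together with its corollary for the subcritical range) gives an explicit value of $\inf_{t\in[a,b]}\int_a^b k(t,s)\,ds=1/M(a,b)$, and the inequality in $(\mathrm{I}_{\rho,\omega}^{0})$ is precisely $f^\omega_{(\rho,\rho/c)}/M(a,b)>1$, which by the remark after Lemma \ref{idx0b1} implies $(\mathrm{I}_{\rho}^{0})$.

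Once these translations are in hand, the six existence statements follow by a direct appeal to Theorem~\ref{thmmsol1} applied to the operator $F$ associated with kernel \eqref{gbarra}, weight $g\equiv 1$, nonlinearity $f(t,u,v)=h(t,u,v)+\omega v$, and the cone $K$ built on the interval $[a,b]$ and constant $c$ above; the fixed points of $F$ in $K$ correspond to non-zero solutions of \eqref{eqgenpro2}--\eqref{prdic}. I expect the only real obstacle to be the bookkeeping verification that the positivity of $f$ on the relevant slab $[a,b]\times[\rho,\rho/c]\times[-\rho/c,\rho/c]$ is preserved after truncation (so that the infimum in $(\mathrm{I}_{\rho,\omega}^{0})$ is genuinely non-negative), and the careful check that the interval restriction $a\in(\max\{0,T-\tfrac{\pi}{4\omega}\},\tfrac{T}{2})$ places $[a,b]$ in the positivity strip of Lemma \ref{posstrip} so that Lemma \ref{lemc} is applicable with the stated constant $c$; the rest is a routine application of the abstract machinery already developed in Sections~2--4.
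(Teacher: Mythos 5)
Your proposal is correct and follows exactly the route the paper intends: the theorem is stated without a separate proof precisely because it is Theorem \ref{thmmsol1} specialized to the kernel \eqref{gbarra} with $g\equiv 1$ and $f(t,u,v)=h(t,u,v)+\omega v$, with hypotheses $(C_1)$--$(C_4)$ and the constants $\Phi$, $c$, $m$, $M(a,b)$ supplied by Lemmas \ref{posstrip}, \ref{max1}, \ref{lemc}, \ref{intabs} and \ref{leminf}. Your identification of the two delicate points (positivity of the shifted nonlinearity after truncation, and the placement of $[a,b]$ inside the positivity strip) matches the checks the paper relies on.
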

\subsection{Example}
Consider problem \eqref{eqgenpro2}-\eqref{prdic} with
$$h(t,u,v)=\frac{1}{2+(t-1)^2}+\frac{u^2}{5}+2u+\frac{1}{1+7v^2}+7.$$
Let $T=1$, $\zeta=\omega=1.5$, $a=.48$, $b=.52$, $\rho_1=1$, $\rho_2=2$. Conditions ($C_1$)--($C_3$) are clearly satisfied by the results proved before. $(C_4)$ is implied in a straightforward way from the expression of $h$, so we are in the hypothesis of Theorem \ref{thmk}. Also,
\begin{align*}
c & =0.000353538\dots, \\
r_1: & =\omega\left[1+\frac{\sqrt2\cos\frac{2\zeta+\pi}{3}\sin\frac{\pi-4\zeta}{12}+\cos\frac{\pi-\zeta}{3}\(1-\sin\frac{2\zeta+\pi}{3}\)}{\sin\zeta}\right]^{-1}=11.5009\dots,\\
r_2: & =\(\inf_{t\in[a,b]}\int_{a}^{b}k(t,s)\,ds\)^{-1}=\(\frac{\sin\omega(T-2a)+\cos\zeta-\cos2\omega a}{2\omega\sin\zeta}\)^{-1}=6.58486\dots,\\
f^{-\rho_1,\rho_1}_\omega & =\frac{h(1,\rho_1,\rho_1)+\rho_1\omega}{\rho_1}=11.325,
\\
f_{(\rho_2 ,{\rho_2 /c})}^\omega & =\frac{h(a,\rho_2,0)}{\rho_2}=6.62418\dots
\end{align*}
Clearly, $f^{-\rho_1,\rho_1}_\omega<r_1$ and $f_{(\rho_2 ,{\rho_2 /c})}^\omega>r_2$, so condition $(S_{2})$ in the previous theorem is satisfied, and therefore problem \eqref{eqgenpro2}-\eqref{prdic} has at least one solution.

\end{document}